\newtheorem{theorem}{\bf Theorem}[section]
\newtheorem{proposition}{\bf Proposition}[section]
\newtheorem{remark}{\bf Remark}[section]
\newtheorem{example}{\bf Example }[section]
\newenvironment{proof}{
\begin{trivlist}
\item[\hspace{\labelsep}{\bf\noindent Proof. }] }{\par\hfill\end{trivlist}
\par}
\date{\empty}
\title{
\huge\bf Analysis and applications of the residual varentropy of random lifetimes\thanks{
To appear on {\bf Probability in the Engineering and Informational Sciences}. 
}}
\author{
Antonio  {\bf Di Crescenzo}\footnote{Corresponding author \ -- \ Address: 
Dipartimento di Matematica, Universit\`a degli Studi di Salerno, Via Giovanni Paolo II n.\ 132, I-84084 Fisciano (SA), Italy \ -- \ Email: adicrescenzo@unisa.it \ -- \ ORCID: 0000-0003-4751-7341}
 \quad  and  \quad  
Luca {\bf Paolillo}\footnote{Address: 
Dipartimento di Matematica, Universit\`a degli Studi di Salerno, Via Giovanni Paolo II n.\ 132, I-84084 Fisciano (SA), Italy \ -- \ Email: lpaolillo@unisa.it \ -- \ ORCID: 0000-0001-7146-4863} 
}
\begin{document}
 
\maketitle


\begin{abstract}
In  reliability theory and survival analysis, the residual entropy is known 
as a measure suitable to describe the dynamic information content in stochastic 
systems conditional on survival. Aiming to analyze the variability of such information content, 
in this paper we introduce the variance of the residual lifetimes, ``residual varentropy'' in short. 
After a theoretical investigation of some  properties of the residual varentropy, 
we illustrate certain applications related to the proportional hazards model and the 
first-passage times of an Ornstein-Uhlenbeck jump-diffusion process. 
\end{abstract}



\section{Introduction}
The differential entropy is a well-known information measure that represents the expectation of the information 
content of an absolutely continuous random variable. The corresponding variance is termed varentropy 
and is used in various applications of information theory, such as for the estimation of the performance 
of optimal block-coding schemes. Recent contributions on the varentropy can be found in various 
papers by Arikan \cite{Arikan2016}, Bobkov and Madiman \cite{Bobkov2011}, 
Fradelizi {\em et al.}\ \cite{Fradelizi2016}, 
Kontoyiannis and Verd\'u \cite{Kontoyannis2013}, 
\cite{Kontoyannis2014}, \cite{Verdu2012}. Most of such results have been aimed to mathematical 
properties or to applications in information theory. However, it should be pointed out that such information 
measures often deserve interest in other fields, such as  reliability and survival analysis. See, for instance,  
Nanda and Chowdhury \cite{nanda2019} for a recent comprehensive review on the Shannon's entropy 
and its applications in various fields. Several investigations have been oriented in the past to assess the information 
content of stochastic systems  with special attention to dynamic measures related to the residual lifetime, 
the past lifetime, the inactivity time and their suitable generalizations. However, no efforts have been 
dedicated to the analysis of the variance of the information content in dynamic contexts. 
\par
On the ground of the above remarks, the motivation of this paper is 
to investigate the varentropy of residual lifetimes 
in a field related to reliability theory. The main aim is to measure the variability of the dynamic 
information content of stochastic systems that are conditioned on survival. This investigation is  
motivated by the need of constructing new mathematical tools suitable to describe the time course 
of the information content in addition to the residual entropy.
Our attention is devoted to disclose properties of the varentropy of residual lifetimes. We give 
special attention to the conditions such that it is constant. We also discuss the effect of linear 
transformations and provide suitable lower and upper bounds. Moreover, we focus on certain applications 
involving the proportional hazards model and the 
first-passage times for  Ornstein-Uhlenbeck jump-diffusion processes. 
\par
The paper is organized as follows: 
In Section 2, we recall some basic results on useful notions of information theory and reliability theory, 
with special attention to the varentropy and the residual lifetimes. 
In Section 3, we introduce the residual varentropy and investigate some properties of such new measure. 
Among other facts, we find conditions involving the generalized hazard rate such that the residual varentropy 
is constant, we discuss the effect of linear transformations, and obtain suitable upper and lower bounds 
for the residual varentropy.  
Section 4 is devoted to some applications. We first deal with the proportional hazard rates model and 
the reliability analysis of series system. We also discuss an application to first-passage times of the 
Ornstein-Uhlenbeck jump-diffusion process arising from the Ehrenfest model subject to catastrophes. 
\par
Throughout the paper, $\mathbb{E}[\,\cdot\,]$ denotes expectation, $g'$ means the derivative of $g$, 
``$\log$'' is the natural logarithm, and we set $0\log 0 =0$ by convention. 
Moreover, notation $[X|B]$ is adopted for a random variable whose distribution is identical to that of $X$ conditional on $B$. 
\section{Background} 
Let $X$ be a random variable defined on a probability space $(\Omega, {\cal F}, \mathbb{P})$,  
and let $F(t)=\mathbb{P}(X\leq t)$, $t\in \mathbb{R}$, be its cumulative distribution function (cdf). 
We denote by $\overline{F}(t)=1-F(t)$ the complementary distribution function, also known as survival function. 

\subsection{Varentropy}
If $X$ is absolutely continuous with probability density function (pdf) $f(t)$, 
we can introduce the random variable
\begin{equation}\label{eq:variabile informazione}
 IC(X)=-\log f(X),
\end{equation}
that is often referred as the (random) information content of $X$. 
We recall that $IC(X)$ is the natural counterpart of the 
number of bits needed to represent $X$ in the discrete case by a coding scheme that minimizes 
the average code length (see  \cite{Shannon1948}). A very common uncertainty measure 
is the expectation of the information content of $X$, given by 
\begin{equation}\label{eq:shannonentropy}
 H(X):= \mathbb{E}[IC(X)]= - \mathbb{E}[\log f(X)]
 = -\int_{-\infty}^\infty{f(x)\log{f(x)}}\,{\rm d}x,
\end{equation}
which is termed 
differential entropy. Intuitively, $H(X)$ measures the expected uncertainty contained in $f(x)$ about the 
predictability of an outcome of $X$. We remark that $H(X)$ may or may not exist (in the Lebesgue sense). 
We remark that the differential entropy is also related to the evaluation of the size of the 
smallest set containing the realizations of typical random samples taken from $X$ 
(see Chapter 9 of   \cite{CoverThomas1991}). 
When the differential entropy exists, it takes values in the extended real line $[-\infty, \infty]$, 
whereas the entropy of discrete random variables is always nonnegative. Other incongruities 
have been pointed out in various investigations (see, for instance, \cite{Cufaro} 
and  \cite{Schroeder}). Nevertheless, the use of differential entropy is largely adopted 
in stochastic modeling and applied fields. 
In information theory, large attention is given to the so-called entropy power of a continuous 
random variable $X$, which is a positive quantity expressed in terms of $H(X)$. 
Rather than in stochastic modeling, it is usually adopted 
to compare the differential entropy of a sum of independent random variables with their individual 
differential entropies, and with the entropy of a suitable sum of independent normal random variables 
(see Chapter 16 of    \cite{CoverThomas1991}, and 
\cite{MadimanBarron2007} also for its connection to the Fisher information). 
Hence, the entropy power is useful to analyze stochastic systems governed by unbounded 
random variables that are comparable to Gaussian ones. 
However, in the following sections 
we shall concern mainly with nonnegative random lifetimes. 
\par
Bobkov and Madiman \cite{Bobkov2011} investigated a relevant problem concerning the concentration 
of the information content around the entropy in high dimensions when the pdf of $X$ is log-concave. 
Restricting our attention to the one-dimensional case, hereafter we focus on a relevant quantity related 
to the concentration of $IC(X)$ around $H(X)$, namely the so-called varentropy of $X$, which is defined 
as the variance of the information content of $X$, i.e. 
\begin{eqnarray}\label{eq:varentropy}
V(X) \!\!\!\! & := & \!\!\!\! \mathrm{Var}[IC(X)]= \mathrm{Var}[\log f(X)]
 =\mathbb{E}[(IC(X))^2]-[H(X)]^2 
 \nonumber \\
 & =& \!\!\!\! \int_{-\infty}^\infty{f(x)[\log{f(x)}]^2}\,{\rm d}x- \left[\int_{-\infty}^\infty{f(x)\log{f(x)}}\,{\rm d}x\right]^2.
\end{eqnarray}
The varentropy thus measures the variability in the information content of $X$. The relevance of this 
measure has been pointed out in various investigations, 
especially from Fradelizi {\em et al.}\ \cite{Fradelizi2016}, 
that start from the concept of varentropy of a random variable $X$ and use it to find an optimal 
varentropy bound for log-concave distributions.  Furthermore, 
a sharp uniform bound on varentropy for log-concave distributions is found in the work of 
Madiman \cite{Madiman2014}. An alternative way to calculate a bound for varentropy is 
discussed in  Goodarzi {\em et al.}\ \cite{Goodarzi 2017} where the authors 
use some concepts of reliability theory.  
The generalization from log-concave to convex measures has been studied in the work of Li {\em et al.}\ \cite{Li2016} 
where a bound on the varentropy for convex measures is discussed.  
We recall other works that deal with the bounds of the varentropy in the contest of source coding. 
In particular, Arikan \cite{Arikan2016}, 
analyzing the case of the polar transform, shows that varentropy decreases to zero 
asymptotically as the transform size increases.  
In studies on the lossless source code, 
it is possible to relate varentropy to the dispersion of the source code, as shown  
in the papers by Kontoyiannis and Verd\'u \cite{Kontoyannis2013}, \cite{Kontoyannis2014}, \cite{Verdu2012}. 
Specifically, together with the entropy rate, the varentropy rate serves to tightly approximate the fundamental 
nonasymptotic limits of fixed-to-variable compression for all but very small block lengths. 
\par
We remark that, due to (\ref{eq:shannonentropy}) and (\ref{eq:varentropy}), 
both the entropy and varentropy do not depend on the realization of $X$ but only on its pdf $f$.
\par
In analogy with (\ref{eq:shannonentropy}) and (\ref{eq:varentropy}), the entropy and the varentropy of a discrete 
random variable $X$ taking values in the set $\{x_i ; i\in I\}$ are expressed, respectively, as 
\begin{equation}\label{eq:entropia discreta}
H(X)=\mathbb{E}[IC(X)]=-\sum_{i\in I} \mathbb{P}(X=x_i) \log{\mathbb{P}(X=x_i)} 
\end{equation}
and 
\begin{equation}\label{eq:varentropy discreta}
V(X)= \mathrm{Var}[IC(X)]=\sum_{i\in I} \mathbb{P}(X=x_i) [\log{\mathbb{P}(X=x_i)} ]^2-[H(X)]^2.
\end{equation}
\par
Hereafter, we  analyze an illustrative example related to a three-valued random variable.
\begin{example}\label{es:dueuno}\rm
Let  $X$ be a discrete random variable such that, for a fixed $h>0$, 
\begin{equation}\label{eq:distrXdiscr}
 \mathbb{P}(X=h)=p, \qquad 
 \mathbb{P}(X=0)=1-p-q, \qquad 
 \mathbb{P}(X=-h)=q, 
\end{equation}
with   $0\leq q\leq 1-p\leq 1$. 
Thus, from (\ref{eq:entropia discreta}) and (\ref{eq:varentropy discreta}) we have  
\begin{equation}\label{eq:entropiaesdueuno}
 H(X; p,q)=-p\log{p}-(1-p-q)\log{(1-p-q)}-q\log{q},
\end{equation}
and
\begin{equation}\label{eq:varentropia gauss valori discreti}
 V(X;p,q)=p(\log{p})^2+(1-p-q)[\log{(1-p-q)}]^2+q(\log{q})^2-[H(X;p,q)]^2.
\end{equation}
\par
Figure \ref{fig:varentropiaEs1} shows the varentropy given in (\ref{eq:varentropia gauss valori discreti}) as a 
function of $(p,q)$.
Clearly, it confirms the symmetry property $V(X;p,q)=V(X;q,p)$. We 
can see that the varentropy vanishes in the following 7 cases:  $(p,q,1-p-q)=$ $(0,0,1)$, $(0,1,0)$, $(1,0,0)$, 
$(0.5,0.5,0)$, $(0.5,0,0.5)$, $(0,0.5,0.5)$, $(1/3,1/3,1/3)$. Moreover, the  maximum of $V(X;p,q)$ is 
attained for $(p,q,1-p-q)=$  $(0.06165, 0.06165, 0.8767)$, $(0.8767, 0.06165, 0.06165)$, 
$(0.06165, 0.8767, 0.06165)$. 
\par 
Now consider a system based on the superposition of three  Gaussian signals. Namely, we deal with a 
random variable, say $Y$, whose pdf is a mixture of Gaussian densities with unity variance and mean given 
by $h$, $0$, $-h$ according to the probability law specified in (\ref{eq:distrXdiscr}). 
Hence,  for $x\in\mathbb{R}$, one has 
\begin{equation}\label{eq:pdfY}
 f_Y(x)=(2\pi)^{-1/2}\left[p e^{-(x-h)^2/2}+ (1-p-q)e^{-x^2/2}+ q e^{-(x+h)^2/2}\right].
\end{equation}
Figure \ref{fig:varentropiaEs1bis} shows some instances of the corresponding varentropy as a function of $h$, 
determined numerically by means of (\ref{eq:varentropy}). It can be shown that $V(Y)$ is not monotonic in $h$; 
moreover it reaches large values for the choices of $(p,q)$ that maximize $V(X;p,q)$ and for large values of $h$.  
%
\begin{figure}[t]  
\centering
\includegraphics[scale=0.7]{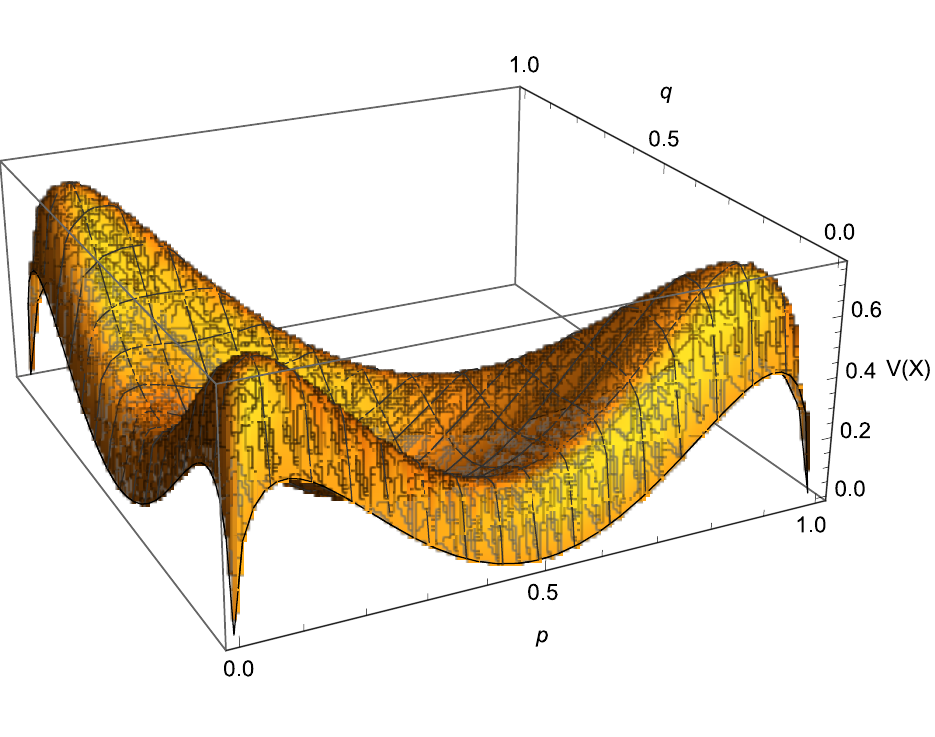} 
$\;$
\includegraphics[scale=0.5]{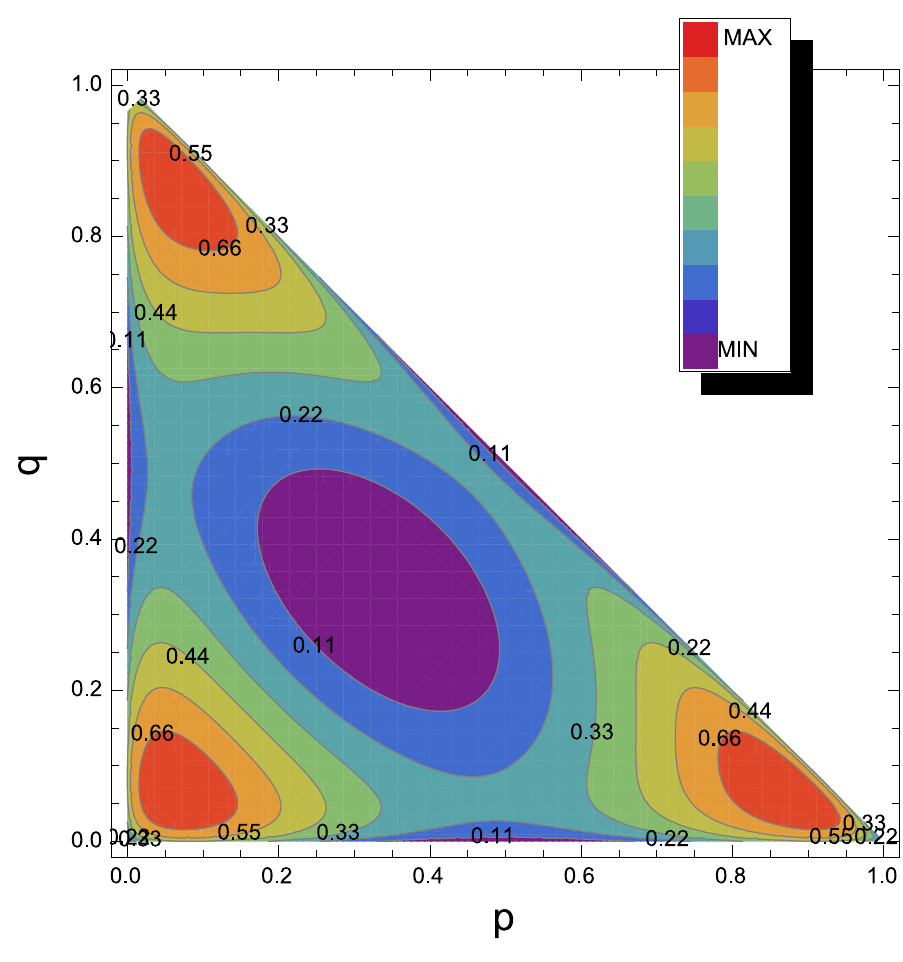} 
\caption{Plots of varentropy \eqref{eq:varentropia gauss valori discreti}; left: 3D plot; right: contourplot.}
\label{fig:varentropiaEs1}
\end{figure}
\begin{figure}[t]  
\centering
\includegraphics[scale=0.8]{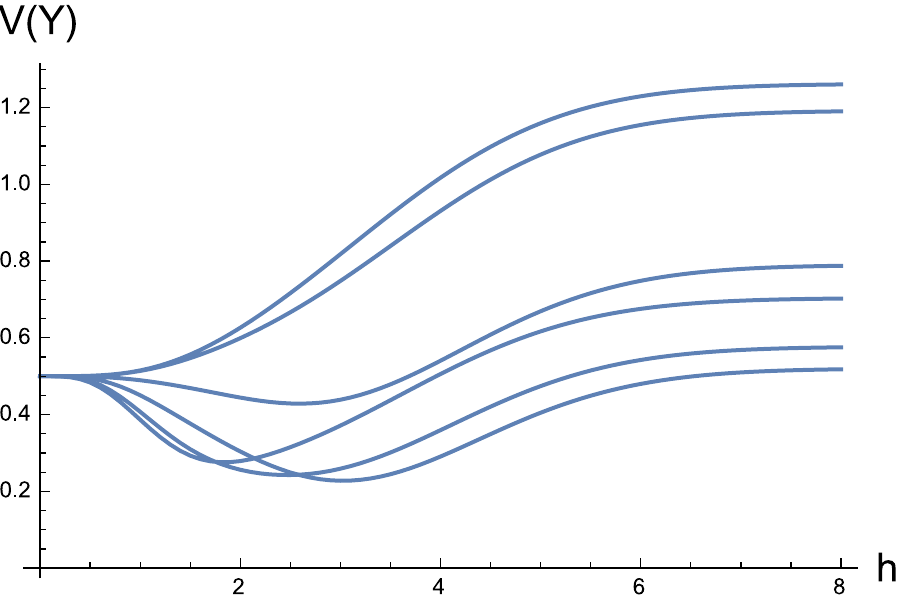} 
\caption{The varentropy corresponding to pdf \eqref{eq:pdfY} for $p=q=0.06165$, $0.1$, $0.2$, $0.45$, $0.4$, $0.3$ 
(from top to bottom for large values of $h$).}
\label{fig:varentropiaEs1bis}
\end{figure}
\end{example}
\par
The relevance of the entropy in information theory and other disciplines is very well known, whereas 
the varentropy has attracted less attention.  Nevertheless, the latter plays a relevant role in the assessment 
of the statistical significance of entropy. Specifically, in the discrete case, the entropy (\ref{eq:entropia discreta}) 
represents the expected number of symbols, in natural base, required to code an event produced by a source 
of information governed by the probability distribution of $X$. In this case, the varentropy (\ref{eq:varentropy discreta})  
measures the variability related to such a coding. In other terms, if two sources of information have the same 
entropy, than the number of digits required in the average to code two sequences produced by such sources is 
the same and is proportional to $H(X)$. However, the number of digits required for a single observed sequence 
in the average is closer to the expected one for the source having the smallest varentropy. Hence, $V(X)$ 
measures how much the entropy is meaningful in the coding of  sequences of symbols generated by $X$. 
\begin{example}\rm 
Let $Y$ be a Bernoulli random variable having distribution $\mathbb{P}(Y=0)=1-\theta$, 
$\mathbb{P}(Y=1)=\theta$, with $0\leq \theta\leq 1$. By means of numerical calculations, it is easy  
to see that for $\theta\approx 0.337009$ one has $H(Y)\approx 0.639032$ and $V(Y)\approx 0.1023$. 
For the distribution considered in the Example \ref{es:dueuno}, if  $p=q=0.1$ from (\ref{eq:entropiaesdueuno}) 
and (\ref{eq:varentropia gauss valori discreti}), we have $H(X)\approx 0.639032$ and $V(X)\approx 0.691852$, 
respectively. Hence, the considered random variables have the same entropy, but the varentropy of $X$ is larger.
This implies that the coding procedure is much more reliable for sequences generated by $Y$. 
\end{example}
%
\subsection{Residual lifetimes}
In order to investigate the role of the varentropy in  reliability theory, we now recall some relevant 
notions in this area. 
Consider a system (such as an item or a living organism) that starts its activity at time 0 
and works regularly up to its failure time. 
Now, we assume that $X$ is a nonnegative absolutely continuous random variable that 
describes the random lifetime of such a system. Hence, 
$H(X)$ is a suitable measure of uncertainty of the failure time. However, the use of $H(X)$ is adequate for a 
brand new system, whereas it is somewhat unrealistic whenever the initial age of the considered system is non-zero. 
In this case, it is appropriate to recall the residual lifetime  
\begin{equation}\label{eq:Xt}
X_t=[X-t|X>t], \qquad t\in D,
\end{equation}
where $D:=
\{t\geq 0: \overline{F}(t)>0\}$. 
Clearly, $X_t$ denotes the system lifetime conditioned to the survival of the system at time $t$. 
The survival function and the pdf of (\ref{eq:Xt}), for any $t\in D$, are given respectively by 
\begin{equation}\label{eq:Ftft}
 \overline F_t(x)=\frac{\overline F(x+t)}{\overline{F}(t)}, 
 \qquad 
 f_t(x)=\frac{f(x+t)}{\overline{F}(t)}, \qquad x>0.
\end{equation}
Hence, recalling (\ref{eq:shannonentropy}), the generalization of the entropy to the residual lifetime 
distributions is given by (see \cite{Ebrahimi1996}, \cite{EbrahimiPellerey1995},  \cite{MuliereP1993})
\begin{equation}\label{eq:entropia}
 H(X_t)= \mathbb{E}[IC(X_t)]
 =-\int_t^\infty{\frac{f(x)}{\overline{F}(t)}\log{\frac{f(x)}{\overline{F}(t)}} \,{\rm d}x}, 
 \qquad  t\in D,
\end{equation}
which is named residual entropy, for short. 
The conventional approach used to characterize the failure distribution of $X$ is either by its (instantaneous) hazard rate function
\begin{equation}\label{eq:hazard rate}
 \lambda(t)=\frac{f(t)}{\overline{F}(t)}
 =\lim_{h\to 0^+} \frac{1}{h} \mathbb P[X\leq t+h|X>t], 
 \qquad t\in D,
\end{equation}
or by its mean residual lifetime function, defined as
\begin{equation}\label{eq:mrl}
 m(t)= \mathbb E(X_t)=\mathbb E[X-t|X>t]=  
 \displaystyle\frac{1}{\overline{F}(t)}\int_t^\infty{\overline{F}(x) \, {\rm d}x}, 
 \qquad t\in D.
\end{equation}
For future needs, we recall also the cumulative hazard rate function of $X$, 
\begin{equation}\label{eq:Lambda}
\Lambda(t)=-\log{\overline{F}(t)}
= \int_0^t \lambda(x)\, {\rm d}x,  \qquad t\in D,
\end{equation}
which plays a relevant role in numerous contexts. Furthermore, we pinpoint the following 
alternative forms of the residual entropy \eqref{eq:entropia}:  
\begin{subequations}\label{eq:entropiaa}
\begin{equation}\label{eq:entropiaalt}
 H(X_t) = -\Lambda(t)-\frac{1} {\overline{F}(t)} \int_t^\infty {f(x)}\log{f(x)} \,{\rm d}x,
\end{equation}
\begin{equation}\label{eq:entropiaalt2}
H(X_t) = 1-\frac{1}{\overline{F}(t)}\int_t^\infty f(x)\log{\lambda(x)} \,{\rm d}x, 
\end{equation}
\end{subequations}
for $t\in D$. Differentiating relation \eqref{eq:entropiaalt}, one has (see, e.g.\ Eq.\ (2.4) of Ebrahimi \cite{Ebrahimi1996})  
\begin{equation}\label{eq:derivata entropia}
	 H'(X_t)=\lambda(t)[H(X_t)-1+\log{\lambda(t)}].
\end{equation}
Moreover, it is known that each of the functions $ \overline{F} $, $\lambda$ and $m$ uniquely determines the other two. 
More specifically, for $t\in D$, we have  
\begin{equation*}
\overline{F}(t) = \exp\left\{- \int_0^t{\lambda(x)\,{\rm d}x}\right\}
= \frac{m(0)}{m(t)} \exp\left\{- \int_0^t{\frac{1}{m(x)}\,{\rm d}x}\right\}, 
\qquad 
\lambda(t)=\frac{m'(t)+1}{m(t)}.
\end{equation*}
%
%
We recall also that Ebrahimi \cite{Ebrahimi1996} showed that  $H(X_t)$ uniquely determines $\overline{F}$ under wide assumptions. 
Useful applications of residual lifetime  distributions in actuarial science 
can be found in Sachlas and Papaioannou \cite{Sachlas2014}.  
%
\section{Residual varentropy}
%
Recalling that the varentropy of a random lifetime $X$ is  defined  
in (\ref{eq:varentropy}), we can now extend the notion of varentropy to the residual lifetime considered in 
(\ref{eq:Xt}). Namely, recalling the second of (\ref{eq:Ftft}), for $t\in D$, we define the varentropy of the 
residual lifetime distribution (residual varentropy, in short) as 
\begin{equation}\label{eq:varentropia}
\begin{split}
V(X_t) &:= {\rm Var}[IC(X_t)]
 = \int_t^\infty{\frac{f(x)}{\overline{F}(t)}\left(\log{\frac{f(x)}{\overline{F}(t)}}\right)^2 {\rm d}x} - [H(X_t)]^2 \\
&=\frac{1}{\overline{F}(t)}\int_t^\infty{f(x)\left[\log{f(x)}\right]^2 {\rm d}x} - \left[\Lambda(t)+H(X_t)\right]^2,
\end{split}
\end{equation}
where $\Lambda(t)$ is given in (\ref{eq:Lambda}), and  $H(X_t)$  is 
provided in (\ref{eq:entropia}) and (\ref{eq:entropiaa}). 
Making use of Eq.\ \eqref{eq:varentropia} we can show, in Table \ref{tab:distribuzioni varentropy costante}, 
some examples in which the residual varentropy is constant.
\begin{table}[h]\centering 
\caption{Selected distributions with constant varentropy. \label{tab:distribuzioni varentropy costante}}
	\begin{tabular}{l c c c}\hline\hline
		\multicolumn{1}{c}{ {Distribution}} &  {Pdf}
		&  {Residual entropy} &  {Residual varentropy}\\ 
		{} & $f(x)$ & $H(X_t)$ & $V(X_t)$ \\
		\hline\\[-5mm]
		Uniform & $\displaystyle\frac{1}{\theta}$ & $\log{(\theta-t)}$ & 0\\[-2mm]
		$D=(0,\theta)$ & & & \\
		Exponential  & $\lambda e^{-\lambda x}$, $\lambda>0$ & $1-\log{\lambda}$ & 1\\[-2mm]
		$D=(0,\infty)$ & & & \\
		Triangular & $2(1-x)$ & $\displaystyle\frac{1}{2}+\log \frac{1-t}{2}$ & $\displaystyle\frac{1}{4}$ \\[-2mm]
		$D=(0,1)$ & & & \\
		\hline\end{tabular}
\end{table}
\par
In the following, we determine the conditions for which the residual varentropy is costant. 
To this aim, we first obtain an expression of its derivative.
\begin{proposition}\label{prop:derivVt}
For all $t\in D$, the derivative of the residual varentropy is 
\begin{equation}
 V'(X_t) =  \lambda(t) \left\{V(X_t) - [H(X_t)+\log \lambda(t)]^2\right\}.
 \label{eq:derivVt}
\end{equation}
\end{proposition}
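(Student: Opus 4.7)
The plan is to differentiate the representation of $V(X_t)$ as a difference of two explicit functions of $t$ and then simplify, using the known derivative \eqref{eq:derivata entropia} of the residual entropy as the main input.

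First I would introduce the abbreviations
\begin{equation*}
 A(t):=\frac{1}{\overline{F}(t)}\int_t^\infty f(x)[\log f(x)]^2\,{\rm d}x,
 \qquad
 B(t):=\Lambda(t)+H(X_t),
\end{equation*}
so that the second form in \eqref{eq:varentropia} reads $V(X_t)=A(t)-B(t)^2$. Differentiating the product $A(t)\overline{F}(t)$ via Leibniz's rule and using $\overline{F}'(t)=-f(t)$ gives
\begin{equation*}
 A'(t)=\lambda(t)\,A(t)-\lambda(t)[\log f(t)]^2.
\end{equation*}
For $B'(t)$, I would combine $\Lambda'(t)=\lambda(t)$ with the already-established identity \eqref{eq:derivata entropia}, $H'(X_t)=\lambda(t)[H(X_t)-1+\log\lambda(t)]$, so the $-1$ and $+1$ cancel and
\begin{equation*}
 B'(t)=\lambda(t)\bigl[H(X_t)+\log\lambda(t)\bigr].
\end{equation*}

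Assembling $V'(X_t)=A'(t)-2B(t)B'(t)$, I would substitute back $A(t)=V(X_t)+B(t)^2$ and, crucially, use the identity $\log f(t)=\log\lambda(t)-\Lambda(t)$ (which follows from $f=\lambda\overline{F}$ and \eqref{eq:Lambda}) to rewrite $[\log f(t)]^2=[\log\lambda(t)-\Lambda(t)]^2$. Writing $a=\Lambda(t)$, $h=H(X_t)$, $\ell=\log\lambda(t)$, the bracket in $\lambda(t)^{-1}V'(X_t)$ becomes
\begin{equation*}
 V(X_t)+(a+h)^2-(\ell-a)^2-2(a+h)(h+\ell).
\end{equation*}

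The final step is a short algebraic expansion: the $a^2$ terms cancel, the $ah$ and $a\ell$ cross terms cancel, and what remains collapses to $V(X_t)-(h+\ell)^2$, yielding exactly \eqref{eq:derivVt}. The only delicate point is ensuring all $\Lambda(t)$-dependent terms vanish; this is the main obstacle, but it is a routine expansion once the substitution $\log f(t)=\log\lambda(t)-\Lambda(t)$ has been made, which is the key observation that makes $\Lambda(t)$ drop out of the final expression.
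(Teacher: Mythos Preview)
Your proof is correct and follows essentially the same route as the paper: both differentiate the second form of \eqref{eq:varentropia}, invoke \eqref{eq:derivata entropia} for $H'(X_t)$, substitute $\log f(t)=\log\lambda(t)-\Lambda(t)$, and then perform the same algebraic cancellation to eliminate $\Lambda(t)$. Your use of the abbreviations $A(t)$, $B(t)$ and then $a,h,\ell$ makes the bookkeeping slightly tidier, but the argument is identical in substance.
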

\begin{proof}
By differentiating both sides of Eq.\ \eqref{eq:varentropia}, and recalling \eqref{eq:hazard rate}, we have 
\begin{eqnarray}
 V'(X_t) \!\!\!\! &=& \!\!\!\! \lambda(t)\left\{ \frac{1}{\overline{F}(t)} \int_t^{\infty} f(x)[\log{f(x)}]^2 \,{\rm d}x -[\log{f(t)}]^2\right\}
 \nonumber \\
 && \!\!\!\! -2[\Lambda(t)+H(X_t)][\lambda(t)+H'(X_t)], \qquad t\in D.
 \label{eq:passaggio 1}
\end{eqnarray}
Then, making use of Eqs.\ (\ref{eq:derivata entropia}) and (\ref{eq:varentropia}),  from (\ref{eq:passaggio 1}) we get 
\begin{eqnarray*}
 V'(X_t) \!\!\!\! &=& \!\!\!\! \lambda(t)\left\{ V(X_t) + [\Lambda(t)+H(X_t)]^2-[\log{f(t)}]^2\right.
\nonumber \\
 && \!\!\!\! \left. -2[\Lambda(t)+H(X_t)][H(X_t)+\log \lambda(t)] \right\}, \qquad   t\in D.
\end{eqnarray*}
Hence, due to (\ref{eq:Lambda}), after some calculations, we obtain Eq.\ (\ref{eq:derivVt}). 
\end{proof}
\par
As a consequence of Proposition \ref{prop:derivVt}, we can now provide some useful results 
involving  the residual varentropy, the residual entropy, the hazard rate, and the varentropy of a lifetime $X$. 
\begin{theorem}\label{th:teorema varentropia costante}
Let  $X$ have a pdf such that $f(t)>0$ for all  
$t\in (0,r)$, with $r\in (0,\infty]$. 
\\
(i) \ If the residual varentropy $V(X_t)$ is constant, say 
\begin{equation}\label{eq: varentropia costante}
 V(X_t)=v \geq 0, \qquad \forall 
 t\in [0,r),
\end{equation}
then the following relation holds: 
\begin{equation}\label{eq:condizione varentropia costante}
 |H(X_t)+\log{\lambda(t)} |=\sqrt{v}, \qquad \forall 
 t\in (0,r).
\end{equation}
(ii) \ Let $c\in \mathbb R$; 
if 
\begin{equation}\label{eq:condizione varentropia costante new}
 H(X_t)+\log{\lambda(t)} =c, \qquad \forall t\in  (0,r),
\end{equation}
then 
\begin{equation}\label{eq: varentropia costante new}
 V(X_t)=c^2 +\frac{V(X)-c^2}{\overline{F}(t)},\qquad \forall t\in [0, r).
\end{equation}
\end{theorem}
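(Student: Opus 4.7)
The plan is to derive both parts from the ODE of Proposition~\ref{prop:derivVt}, namely
$V'(X_t) = \lambda(t)\{V(X_t) - [H(X_t)+\log\lambda(t)]^2\}$, combined with the observation that the hypothesis $f(t)>0$ on $(0,r)$ forces $\lambda(t)>0$ on that interval.

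For part (i), I would simply differentiate the assumed identity $V(X_t)=v$, obtaining $V'(X_t)=0$ for every $t\in (0,r)$. Plugging this into \eqref{eq:derivVt} gives
\begin{equation*}
\lambda(t)\bigl\{v-[H(X_t)+\log\lambda(t)]^2\bigr\}=0.
\end{equation*}
Since $\lambda(t)>0$ on $(0,r)$, we may divide through and take square roots to obtain \eqref{eq:condizione varentropia costante}. This step is essentially immediate once Proposition~\ref{prop:derivVt} is in hand.

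For part (ii), I would substitute the hypothesis $H(X_t)+\log\lambda(t)=c$ directly into \eqref{eq:derivVt}, reducing the identity to the linear first-order ODE
\begin{equation*}
V'(X_t)=\lambda(t)\bigl[V(X_t)-c^2\bigr], \qquad t\in(0,r).
\end{equation*}
Setting $W(t):=V(X_t)-c^2$ turns this into $W'(t)=\lambda(t)W(t)$, which integrates to $W(t)=W(0)\exp\!\bigl(\int_0^t\lambda(x)\,{\rm d}x\bigr)$. Using the definition \eqref{eq:Lambda} of the cumulative hazard and the identity $\exp\Lambda(t)=1/\overline{F}(t)$, together with the initial value $W(0)=V(X)-c^2$, this yields exactly \eqref{eq: varentropia costante new}.

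The only non-routine point is recognizing that the quantity $H(X_t)+\log\lambda(t)$, which appears quadratically in Proposition~\ref{prop:derivVt}, behaves as the natural ``driving'' term of the dynamics of $V(X_t)$: in (i) it is pinned down (up to sign) by constancy of $V(X_t)$, and in (ii) it decouples the ODE into a linear one that can be integrated in closed form using $\Lambda(t)=-\log\overline{F}(t)$. The positivity of $\lambda$ on $(0,r)$, guaranteed by the assumption on $f$, is what allows the division in part (i) and ensures the ODE in part (ii) is well-posed on the stated interval; aside from this, the argument is a direct consequence of Proposition~\ref{prop:derivVt}.
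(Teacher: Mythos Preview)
Your proposal is correct and follows essentially the same route as the paper: both parts are obtained directly from Proposition~\ref{prop:derivVt}, using $\lambda(t)>0$ to cancel in (i) and solving the resulting linear ODE $V'(X_t)=\lambda(t)[V(X_t)-c^2]$ with initial value $V(X)$ in (ii). Your write-up simply makes explicit the integrating-factor step (via $W(t)=V(X_t)-c^2$ and $\exp\Lambda(t)=1/\overline{F}(t)$) that the paper leaves to the reader.
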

\begin{proof}
Since $f(t)>0$ for all $t\in (0,r)$, 
the assumption (\ref{eq: varentropia costante}) immediately 
gives (\ref{eq:condizione varentropia costante}), due to (\ref{eq:derivVt}). 
Moreover, if condition (\ref{eq:condizione varentropia costante new}) holds, then 
Eq.\ (\ref{eq:derivVt}) becomes 
$$
  V'(X_t) =  \lambda(t) \left\{V(X_t) - c^2\right\}, \qquad t \in  (0,r),
$$
with initial condition $ V(X_t)|_{t=0}=V(X)$. Finally, it is not hard to see that the solution of such problem 
yields Eq.\  (\ref{eq: varentropia costante new}). 
\end{proof}

\par
Let us now recall the notion of {\em generalized hazard (or failure) rate} of $X$  expressed by 
(see Schweizer and Szech \cite{Schweizer2015}) 
\begin{equation}\label{eq:genfailrate}
	\lambda_{\alpha}(t)=\frac{f(t)}{[\overline{F}(t)]^{1+\alpha}}, 
	\qquad t\in D,
\end{equation}
for $\alpha\in\mathbb{R}$. Clearly, recalling (\ref{eq:hazard rate}), one has $\lambda_{0}(t)=\lambda(t)$ for all $t$. 
Other parameterizations of $\lambda_{\alpha}(t)$ have been treated in Bieniek and Szpak  \cite{BieniekSzpak}
as  a special case of the generalized failure rate defined by Barlow and van Zwet \cite{BarlowvanZwet}. 
Further forms of generalized hazard rates have been considered in the past. 
For instance, Lariviere and Porteus \cite{LaPo2001}, and Maoui {\em et al}.\ \cite{Maoui} considered $t\,\lambda(t)$ 
as generalized hazard rate. Moreover, a different version has been treated in Li and Tewari \cite{LiTe2018}. 
\par
We are now able to provide necessary and sufficient conditions  in terms of the residual entropy 
(cf.\ point (ii) of Theorem \ref{th:teorema varentropia costante}), 
such that the generalized  hazard rate of $X$ is constant.  
Recall that $H(X)$ denotes the entropy given in (\ref{eq:shannonentropy}). 
\begin{theorem}\label{th.const}
Let $X$ possess a pdf such that $f(t)>0$ for all $t\in (0,r)$, with $r\in (0,\infty]$. The generalized hazard rate of $X$ is constant, such that  
\begin{equation}\label{eq:tesi corollario varentropia costante}
	\lambda_{c-1}(t)=  e^{c-H(X)}, \qquad  t\in [0,r),
\end{equation}
if and only if Eq.\ \eqref{eq:condizione varentropia costante new} is fulfilled for a given $c\in\mathbb{R}$. 
\end{theorem}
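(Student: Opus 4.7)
The plan is to convert both sides of the equivalence into a statement about a single auxiliary function and then apply the ODE \eqref{eq:derivata entropia} for the residual entropy.

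First, I would rewrite the generalized hazard rate in a form compatible with $\lambda$ and $\Lambda$. Using $f(t)=\lambda(t)\overline F(t)$ and $\log\overline F(t)=-\Lambda(t)$, the definition \eqref{eq:genfailrate} gives
\begin{equation*}
\log\lambda_{c-1}(t)=\log\lambda(t)-(1-c)\Lambda(t),\qquad t\in[0,r).
\end{equation*}
Differentiating, $\lambda_{c-1}$ is constant on $[0,r)$ if and only if
\begin{equation*}
\frac{\lambda'(t)}{\lambda(t)}=(1-c)\lambda(t),\qquad t\in(0,r). \tag{$\ast$}
\end{equation*}

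Next, I would introduce the auxiliary function $\varphi(t):=H(X_t)+\log\lambda(t)-c$, so that \eqref{eq:condizione varentropia costante new} is precisely $\varphi\equiv 0$. Differentiating $\varphi$ and substituting \eqref{eq:derivata entropia}, I obtain
\begin{equation*}
\varphi'(t)=\lambda(t)\bigl[H(X_t)-1+\log\lambda(t)\bigr]+\frac{\lambda'(t)}{\lambda(t)}=\lambda(t)\varphi(t)+(c-1)\lambda(t)+\frac{\lambda'(t)}{\lambda(t)}.
\end{equation*}
Hence condition $(\ast)$ is equivalent to the homogeneous linear ODE $\varphi'(t)=\lambda(t)\varphi(t)$ on $(0,r)$, whose solutions are $\varphi(t)=\varphi(0)\exp\{\Lambda(t)\}$.

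From here the two implications are short. For the "if" direction, assume \eqref{eq:condizione varentropia costante new}: then $\varphi\equiv 0$, so $\varphi'\equiv 0$, which forces $(\ast)$ and therefore $\lambda_{c-1}(t)$ constant. Evaluating at $t=0$, where $\overline F(0)=1$ and $\lambda(0)=f(0)$, the constant equals $f(0)$; and evaluating \eqref{eq:condizione varentropia costante new} at $t=0$ yields $H(X)+\log f(0)=c$, so $f(0)=e^{c-H(X)}$, proving \eqref{eq:tesi corollario varentropia costante}. For the converse, assume \eqref{eq:tesi corollario varentropia costante}: then $(\ast)$ holds, so $\varphi$ satisfies $\varphi'=\lambda\varphi$; the normalization constant $e^{c-H(X)}$ together with $\lambda_{c-1}(0)=f(0)$ gives $\log f(0)=c-H(X)$, hence $\varphi(0)=H(X)+\log f(0)-c=0$, and the ODE forces $\varphi\equiv 0$, which is \eqref{eq:condizione varentropia costante new}.

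The only delicate point is the bookkeeping between the open/closed interval conventions and making sure the initial value $\varphi(0)$ is pinned down correctly; the positivity assumption on $f$ guarantees $\lambda$ is well defined on $(0,r)$ so the ODE argument is legitimate, and the uniqueness of solutions of a linear first-order ODE with continuous coefficient makes the "forces $\varphi\equiv 0$" step immediate rather than requiring any further analysis.
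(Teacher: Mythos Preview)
Your argument is correct and genuinely different from the paper's. The paper treats the two directions by separate direct computations: for the ``only if'' part it evaluates $\int_0^t f(x)\log f(x)\,{\rm d}x$ explicitly from the assumed form of $f$ and substitutes into the identity $H(X_t)+\log\lambda(t)=\log f(t)+\frac{1}{\overline F(t)}\big\{H(X)+\int_0^t f\log f\big\}$; for the ``if'' part it rewrites \eqref{eq:derivata entropia} as $H'(X_t)=(c-1)\lambda(t)$, integrates to $H(X_t)-H(X)=(c-1)\Lambda(t)$, and combines this with \eqref{eq:condizione varentropia costante new}. You instead encode everything in the single auxiliary function $\varphi=H(X_t)+\log\lambda(t)-c$, observe that constancy of $\lambda_{c-1}$ is equivalent to $\varphi'=\lambda\varphi$, and then settle both directions by checking the initial value $\varphi(0)$.

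Your route is more symmetric and conceptually economical, since one differential relation handles both implications. The price is a slightly stronger implicit regularity assumption: you differentiate $\log\lambda(t)$, whereas the paper's ``only if'' computation avoids $\lambda'$ altogether. Under the smoothness tacitly assumed throughout the paper this is harmless, and you rightly flag the open/closed endpoint issue; the continuity of $H(X_t)$ and $\lambda(t)$ at $t=0$ (following from $f>0$ there) is what lets you pin down $\varphi(0)=0$ and identify the constant value of $\lambda_{c-1}$ as $f(0)=e^{c-H(X)}$.
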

\begin{proof}
Assume that the Eq.\ \eqref{eq:tesi corollario varentropia costante} is fulfilled. Making use of (\ref{eq:hazard rate}) and 
(\ref{eq:entropiaalt}), we have 
\begin{equation}\label{eq:primo membro condizione entropia}
 H(X_t)+\log{\lambda(t)} = \log{f(t)}+\frac{1}{\overline{F}(t)}\left\{H(X)+\int_0^t f(x)\log{f(x)}\,\mathrm{d}x\right\}.
\end{equation}
From the assumption \eqref{eq:tesi corollario varentropia costante} it is not hard to see that 
$$
 \int_0^t f(x)\log{f(x)}\,\mathrm{d}x = -F(t)\, H(X)-c \,\overline{F}(t)\log \overline{F}(t).
$$
Hence, due to Eqs.\ (\ref{eq:tesi corollario varentropia costante}) 
and (\ref{eq:primo membro condizione entropia}), we have 
$$
 H(X_t)+\log{\lambda(t)} =  H(X)+\log \frac{f(t)}{[\overline{F}(t)]^c}=c,
$$
so that  \eqref{eq:condizione varentropia costante new} holds. 
Now, let us prove that  (\ref{eq:condizione varentropia costante new}) implies the validity of 
Eq.\ \eqref{eq:tesi corollario varentropia costante}. In fact, rearranging Eq.\ (\ref{eq:derivata entropia}), we have 
$$
 H(X_t)+\log \lambda(t)=\frac{H'(X_t)}{\lambda(t)}+1,
$$
so that, due to Eq.\ (\ref{eq:condizione varentropia costante new}), one has 
$$
 H'(X_t)=(c-1)  \lambda(t), \qquad t\in (0,r). 
$$
By integration over $[0,t]$, and recalling (\ref{eq:Lambda}), one obtains 
$$
 H(X_t)-H(X)=(c-1) \Lambda(t), \qquad t\in [0,r).
$$ 
Comparing the latter identity with Eq.\ (\ref{eq:condizione varentropia costante new}) 
and in virtue of (\ref{eq:Lambda}), after some algebraic calculations, we get 
$$
 \log \frac{f(t)}{[\overline{F}(t)]^{c}}=c-H(X),
$$
which gives immediately relation 
\eqref{eq:tesi corollario varentropia costante} by virtue of (\ref{eq:genfailrate}). 
\end{proof}
\begin{remark}\rm 
(i) 
It is worth pointing out that, due to Theorem 3.1 of Asadi and  Ebrahimi \cite{AsadiEbrahimi2000}, 
the condition expressed in Eq.\ (\ref{eq:condizione varentropia costante new}) is 
fulfilled if and only if $X$ has a generalized Pareto distribution, with survival function  
\begin{equation}
 \overline F(t)=\left(\frac{b}{a t + b}\right)^{\frac{1}{a}+1}, 
 \qquad t\geq 0,
 \label{eq:genParetosurvf}
\end{equation}
for $a>-1$ and $b>0$. 
The generalized Pareto distribution is a flexible statistical model which is employed in several research areas, such as statistical physics, econophysics and social sciences, since its distribution possesses a tail of general form.
Specifically, it includes the exponential distribution ($a\to 0$), the Pareto distribution ($a> 0$, with heavy tail), 
and the power distribution ($-1<a< 0$, with bounded support). An intuitive reason leading to the above 
result is due to the property that the generalized Pareto distribution is the only family of distributions 
whose mean residual function (\ref{eq:mrl}) is linear (see Hall and Wellner \cite{HallWellner1981}). 
Indeed, for the survival function (\ref{eq:genParetosurvf}) we have $m(t)= a t + b$, with hazard rate function 
$\lambda(t) = \frac{1 + a}{a t + b}$. For a recent characterization of this distribution in the context of shape 
functionals, see Arriaza {\em et al.}\ \cite{Arriaza2019}.  
\\
(ii) 
A special case arises from (\ref{eq:genParetosurvf}) in the limit as $a\to\infty$ and  $b\to\infty$, 
with $\frac{a}{b}\to\lambda>0$, by which the pdf and the survival function of $X$ are given, 
respectively, by 
$$
 f(t)=\frac{\lambda}{(1+\lambda t)^2}, \qquad 
 \overline{F}(t)=\frac{1}{1+\lambda t}, \qquad  t\in[0,\infty).
$$
In this case, $X$ has a modified Pareto distribution that describes the first arrival time in a Geometric 
counting process with parameter $\lambda>0$ (cf.\ Section 2.2 of \cite{DiCrPell}, for instance). 
From Eq.\ \eqref{eq:genfailrate}, it immediately follows that the generalized hazard rate of $X$ 
is a constant for $\alpha=1$, i.e.\ $\lambda_1(t)\equiv \lambda$. As a consequence, 
Eq.\ \eqref{eq:tesi corollario varentropia costante} is fulfilled for $c=2$ and $H(X)=2-\log \lambda$. 
From Theorems \ref{th:teorema varentropia costante} and \ref{th.const},  we thus obtain the 
(increasing) residual entropy, 
$$
 H(X_t)=2-\log \frac{\lambda}{1+\lambda t}, 
 \qquad t\geq 0,
$$
and the corresponding constant residual varentropy, $V(X_t)=4$. 
It is worth pointing out that in this special case, the mean residual lifetime is infinite. Hence, 
for such a stochastic model the residual entropy and the residual varentropy provide 
useful information even if the mean residual lifetime is not finite. 
\end{remark}
\par
The following example is concerning a family of distributions for which the residual varentropy exhibits 
different behaviors.
\begin{example}\label{example:Weibull}\rm 
Let $X_{\lambda,k}$ have  Weibull distribution, with pdf 
\begin{equation}\label{eq:weibull pdf}
f_{\lambda,k}(x)=\frac{k}{\lambda} \left(\frac{x}{\lambda}\right)^{k-1} e^{-({x}/{\lambda})^k}, 
\qquad x>0,
\end{equation}
where $k>0$ is the shape parameter and $\lambda>0$ is the scale parameter. 
Recall that this family of distributions includes special cases of interest, such as the 
exponential distribution (for $k=1$) and the Rayleigh distribution (for $k=2$). 
A characterization of the Weibull distribution in terms of a Gini-type index 
of interest in reliability theory is provided in Theorem 1 of \cite{Parsa2018}. 
The expression of the residual varentropy is omitted being quite cumbersome.
The behavior of the pdf (\ref{eq:weibull pdf}) and of the corresponding residual varentropy 
is visualized in Fig.\ \ref{fig:weibull} for some choices of the shape parameter.
It can be seen that the residual varentropy is decreasing, constant, increasing, non monotonic  
for $k=0.5$, $1$, $1.5$, $3.5$ respectively.

\begin{figure}[t]  
\centering
\includegraphics[scale=0.35]{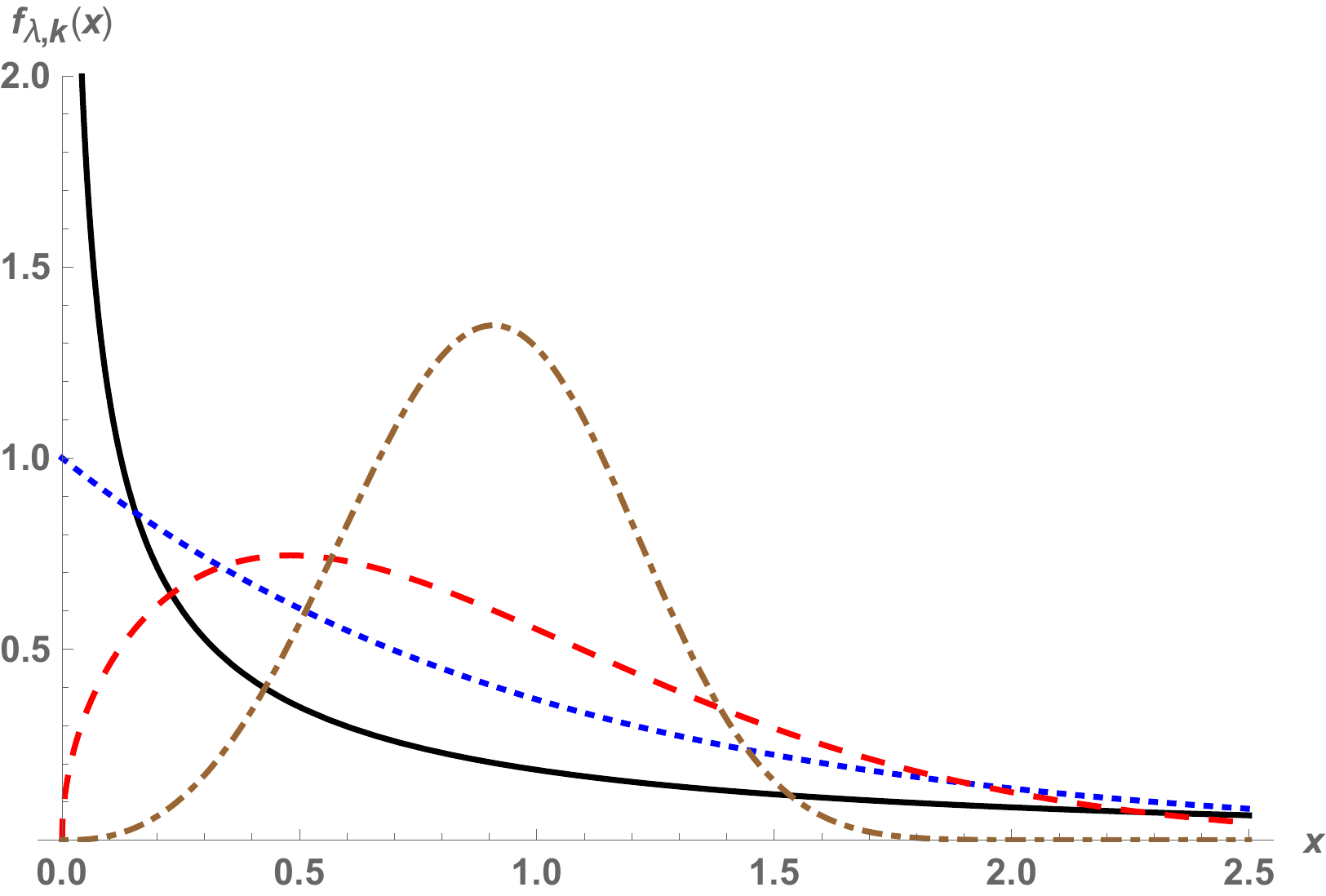}
$\,$
\includegraphics[scale=0.35]{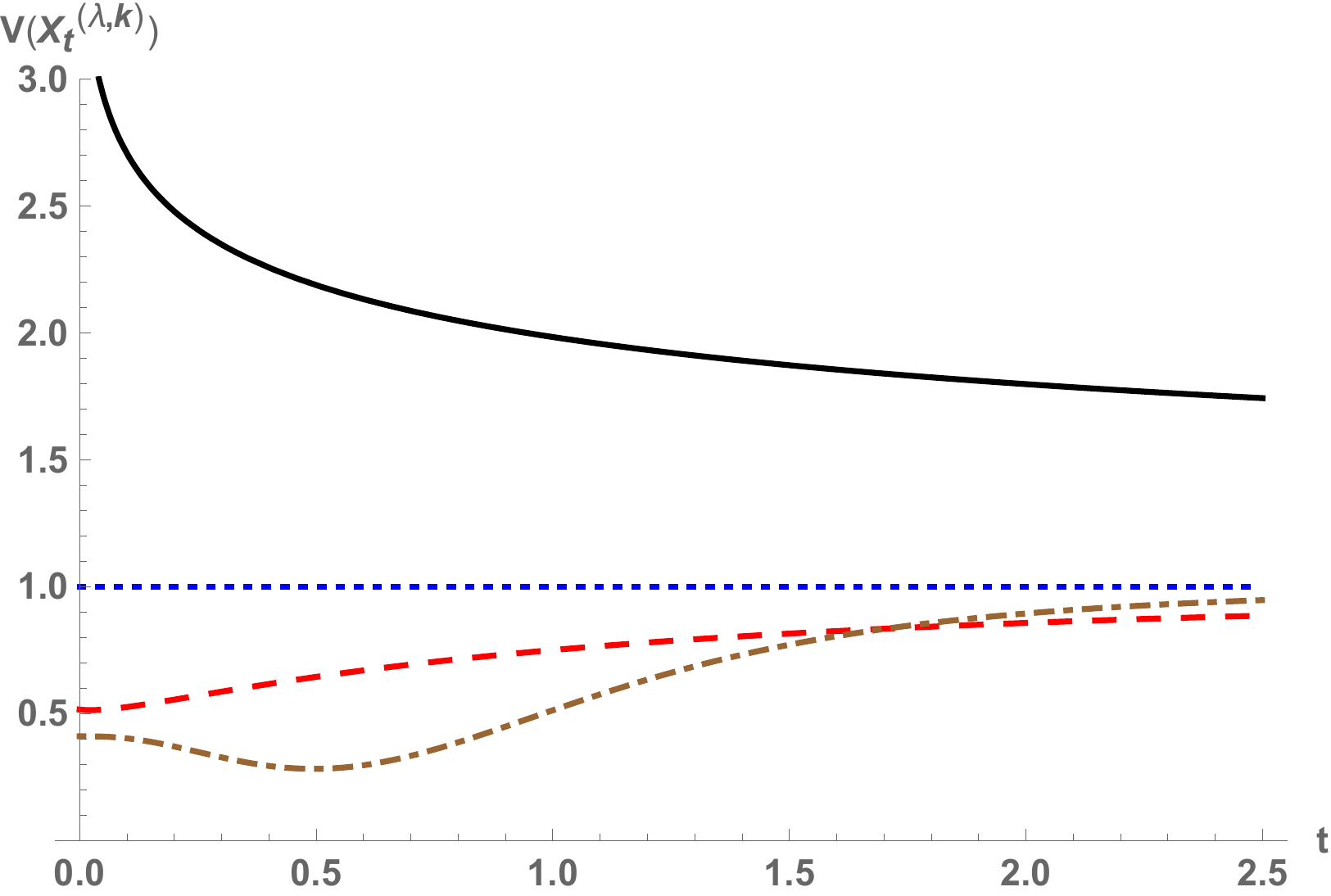}
$\,$
\includegraphics[scale=0.4]{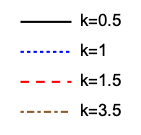}
%
\caption{(left) Weibull pdf, given in \eqref{eq:weibull pdf}, and (right)
	residual varentropy for $\lambda=1$ and various choices of $k$ 
	(as indicated in the label).}
\label{fig:weibull}
\end{figure}

\end{example}
\par
Let us now analyze the effect of linear transformations to the residual varentropy. 
We recall that if 
\begin{equation}\label{eq:definizione Yt}
	Y=a X + b, \qquad a>0, \quad b\geq 0,
\end{equation}
then the residual entropy of $X$ and $Y$ are related by (see Eq.\ (2.6) of Ebrahimi and Pellerey \cite{EbrahimiPellerey1995}) 
\begin{equation}\label{eq:relresentr}
 H(Y_t)=H\left(X_{\frac{t-b}{a}}\right)+\log a, \qquad \forall \  t. 
\end{equation}
\begin{proposition}\label{prop: trasformazioni lineari}
Let $X$ and $Y$ be related by (\ref{eq:definizione Yt}). Hence, for their residual varentropies, we have:
\begin{equation}\label{eq:tesi proposizione trasformazione lineare}
	V(Y_t)=V\left(X_{\frac{t-b}{a}}\right) \qquad \forall \ t .
\end{equation}
\end{proposition}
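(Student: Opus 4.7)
The plan is to exploit two observations: first, that the residual lifetime of a linear transformation is itself a scaled residual lifetime, and second, that shifting the information content by a constant leaves its variance invariant.

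First I would identify the residual lifetime of $Y$. Since $a>0$ and $b\ge 0$, the events $\{Y>t\}$ and $\{X>(t-b)/a\}$ coincide, and on this event $Y-t = a\left(X-\frac{t-b}{a}\right)$. Hence, in distribution,
\begin{equation*}
Y_t \stackrel{d}{=} a\, X_{(t-b)/a}.
\end{equation*}
From the standard change-of-variables formula, the pdf of $Y_t$ is therefore
\begin{equation*}
f_{Y_t}(y)=\frac{1}{a}\, f_{X_{(t-b)/a}}\!\left(\frac{y}{a}\right).
\end{equation*}

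Next I would translate this into a relation between information contents. Taking logarithms and evaluating at $Y_t$, we find
\begin{equation*}
IC(Y_t) = -\log f_{Y_t}(Y_t) = \log a - \log f_{X_{(t-b)/a}}\!\left(\frac{Y_t}{a}\right) = \log a + IC\!\left(X_{(t-b)/a}\right),
\end{equation*}
since $Y_t/a \stackrel{d}{=} X_{(t-b)/a}$. Thus $IC(Y_t)$ differs from $IC(X_{(t-b)/a})$ by the deterministic additive constant $\log a$.

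Finally, taking variances on both sides and recalling the definition in \eqref{eq:varentropia}, the constant drops out and we obtain \eqref{eq:tesi proposizione trasformazione lineare}. There is no real obstacle here; the only thing to be careful about is to verify that the chain of distributional identities is legitimate when $(t-b)/a$ lies outside the support of $X$ (in which case both sides are understood to be undefined in the same way), and to note that the identity \eqref{eq:relresentr} for the residual entropy is consistent with, and in fact a corollary of, the same identity $IC(Y_t) = \log a + IC(X_{(t-b)/a})$ after taking expectation rather than variance.
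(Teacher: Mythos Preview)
Your proof is correct. The underlying content is the same as the paper's---both hinge on the fact that passing from $X$ to $aX+b$ shifts the log-density by the constant $\log a$, which disappears upon taking variances---but the packaging differs. The paper proceeds analytically: it writes $f_Y(x)=\frac{1}{a}f_X\bigl(\frac{x-b}{a}\bigr)$, plugs this into the integral definition \eqref{eq:varentropia} of $V(Y_t)$ together with \eqref{eq:relresentr}, and then reduces the resulting expression by direct calculation. You instead work probabilistically: you first establish the distributional identity $Y_t \stackrel{d}{=} a\,X_{(t-b)/a}$, translate it into $IC(Y_t)\stackrel{d}{=}\log a + IC(X_{(t-b)/a})$, and then invoke the translation-invariance of variance. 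Your route is shorter and makes the mechanism transparent (and, as you note, yields \eqref{eq:relresentr} as a by-product by taking expectations), while the paper's route stays closer to the explicit integral formulas used elsewhere in the article.
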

\begin{proof}
Clearly, from (\ref{eq:definizione Yt}) we have that the cdfs and the pdfs of $Y$ and $X$ are related by 
$F_Y(x)=F_X\left(\frac{x-b}{a}\right)$ and $f_Y(x)=\frac{1}{a} \ f_X\left(\frac{x-b}{a}\right)$. 
Hence, recalling \eqref{eq:varentropia} and (\ref{eq:relresentr}), it is not hard to see that 
$$
 V(Y_t)=\int_{\frac{t-b}{a}}^\infty{ \frac{f_X(x)}{\overline{F}_X(\frac{t-b}{a})} 
	\left[\log{\frac{f_X(x)}{\overline{F}_X(\frac{t-b}{a})}}-\log{a}\right]^2\mathrm{d}x}
	-\left[H\left(X_{\frac{t-b}{a}}\right)+\log{a}\right]^2. 
$$
The thesis (\ref{eq:tesi proposizione trasformazione lineare}) thus follows after some calculations. 
\end{proof}
%
\subsection{Bounds}
We conclude this section by discussing some bounds to the residual varentropy. 
\par
First, we provide a lower bound for $V(X_t)$. It will be expressed in terms of the 
``variance residual life function'', defined as the variance of (\ref{eq:Xt}), that is, 
\begin{equation}\label{eq:sigma2t}
\sigma^2(t)={\rm Var}(X_t)={\rm Var}[X-t|X>t]
=\frac{2}{\overline{F}(t)}\int_t^{\infty}{\rm d}x\int_x^{\infty} \overline{F}(y)\,{\rm d}y-[m(t)]^2,
\end{equation}
with $m(t)$ defined in (\ref{eq:mrl}). 
For instance, see Gupta \cite{Gupta2006} for characterization results  and properties of 
$\sigma^2(t)$.  
\begin{theorem}\label{Th:lowerbound}
Let $X_t$ be a residual lifetime as defined in (\ref{eq:Xt}), 
and assume that the corresponding mean residual lifetime $m(t)$ and variance residual 
lifetime $\sigma^2(t)$ are finite (cf.\ (\ref{eq:mrl}) and (\ref{eq:sigma2t}), respectively). Then, for all $t\in D$, 
\begin{equation}\label{eq:boundVXt}
V(X_t)\geq \sigma^2(t)\, (\mathbb E[w_t'(X_t)])^2
\end{equation}
where the function $w_t(x)$ is defined by 
$$
 \sigma^2(t)\,w_t(x)\, f_{t}(x)=
 \int_{0}^x 
 {[m(t)-z]\, f_{t} (z)\,{\rm d}z},
 \qquad 
 x>0,
$$
with $f_{t}(x)$ given in the second of (\ref{eq:Ftft}).
\end{theorem}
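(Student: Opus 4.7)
The plan is to recognize the defining relation for $w_t$ as a residual-lifetime version of the classical Cacoullos--Chernoff weight function and to combine a Stein-type identity with the Cauchy--Schwarz inequality. Set $\widetilde w_t(x):=\sigma^2(t)\,w_t(x)$, so that the hypothesis reads
\[
 \widetilde w_t(x)\,f_t(x)=\int_0^x [m(t)-z]\,f_t(z)\,{\rm d}z.
\]
Observe that the right-hand side vanishes both at $x=0$ and as $x\to\infty$, the latter because $\int_0^\infty [m(t)-z]\,f_t(z)\,{\rm d}z=m(t)-\mathbb{E}(X_t)=0$ by \eqref{eq:mrl}. This vanishing will make the forthcoming integration by parts free of boundary contributions.

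Next, for any absolutely continuous function $g$ for which the pertinent integrals are finite, integration by parts gives
\[
 \mathbb{E}[\widetilde w_t(X_t)\,g'(X_t)]
 =\int_0^\infty g'(x)\!\left(\int_0^x[m(t)-z]\,f_t(z)\,{\rm d}z\right){\rm d}x
 =\int_0^\infty g(x)\,[x-m(t)]\,f_t(x)\,{\rm d}x
 ={\rm Cov}(X_t,g(X_t)).
\]
On the other hand, differentiating the defining relation with respect to $x$ and dividing by $f_t(x)$ yields
\[
 \widetilde w_t'(x)=m(t)-x-\widetilde w_t(x)\,(\log f_t)'(x).
\]
Taking expectations with respect to $f_t$ and using $\mathbb{E}(X_t)=m(t)$ kills the first two terms, leaving
\[
 \mathbb{E}[\widetilde w_t'(X_t)]=-\mathbb{E}\!\left[\widetilde w_t(X_t)\,(\log f_t)'(X_t)\right]
 =-{\rm Cov}(X_t,\log f_t(X_t))={\rm Cov}(X_t,IC(X_t)),
\]
where in the second equality I plugged $g=\log f_t$ into the Stein-type identity just established.

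Finally, Cauchy--Schwarz gives $\big({\rm Cov}(X_t,IC(X_t))\big)^2\le \sigma^2(t)\,V(X_t)$, hence
\[
 V(X_t)\ \ge\ \frac{(\mathbb{E}[\widetilde w_t'(X_t)])^2}{\sigma^2(t)}
 \ =\ \frac{(\sigma^2(t))^2(\mathbb{E}[w_t'(X_t)])^2}{\sigma^2(t)}
 \ =\ \sigma^2(t)\,(\mathbb{E}[w_t'(X_t)])^2,
\]
which is \eqref{eq:boundVXt}. The only delicate step is justifying the integration by parts; this is the main obstacle but is handled by the two vanishing conditions on $\int_0^x[m(t)-z]\,f_t(z)\,{\rm d}z$ at the endpoints of the support of $X_t$, together with the finiteness of $m(t)$ and $\sigma^2(t)$ assumed in the statement.
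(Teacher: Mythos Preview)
Your proof is correct and follows essentially the same route as the paper: both rely on the Cacoullos--Papathanasiou variance lower bound (which you re-derive explicitly via the Stein-type identity plus Cauchy--Schwarz, whereas the paper simply cites it) applied with $g=-\log f_t$, together with an integration by parts/differentiation of the defining relation to pass from $\mathbb{E}[w_t(X_t)g'(X_t)]$ to $\mathbb{E}[w_t'(X_t)]$. Your version is more self-contained and makes the vanishing boundary terms explicit, but the underlying argument is the same.
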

\begin{proof}
We recall that if $X$ is an absolutely continuous random variable with pdf $f(x)$, mean $\mu$ 
and variance $\sigma^2$, then (cf.\ Cacoullos and Papathanasiou \cite{Cacoullos1989}) 
\begin{equation}\label{eq:boundG}
 {\rm Var}[g(X)] \geq \sigma^2 (\mathbb{E}[w(X) g'(X)])^2,
\end{equation}
where $w(x)$ is defined by  $\sigma^2 w(x) f(x)= 
\int_{0}^x (\mu-z)f(z)\,{\rm d}z$. 
Hence, by taking $X_t$ as 
reference, with $g(x) =- \log f(x)$ and integrating by parts, similarly as Eq.\ (3.9) of Goodarzi {\em et al.}\ \cite{Goodarzi 2017}, 
we obtain (\ref{eq:boundVXt}). 
\end{proof}
\par
Note that the equality in (\ref{eq:boundG}) holds if and only if $X$ is exponentially distributed. 
\par
Hereafter, we determine suitable upper bounds to the residual varentropy, 
thus providing conditions on its finiteness. First, we recall that $X$ is said to be ILR 
(increasing in likelihood ratio) if its pdf $f(x)$ is such that $\log f(x)$ is  a concave 
function on $(0,\infty)$; equivalently, we say that $X$ has a log-concave pdf. 
\begin{theorem}\label{Th:upperbound}
Given a random lifetime $X$ with log-concave pdf $f(x)$, then
$$
 V(X_t)\leq 1, \qquad \hbox{for all $t\in D$}.
$$ 
\end{theorem}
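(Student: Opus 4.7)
My strategy is to reduce the statement to the sharp one-dimensional varentropy bound for log-concave random variables, which asserts that any absolutely continuous random variable $Y$ on $\mathbb{R}$ with log-concave density satisfies $V(Y)\le 1$; see Bobkov and Madiman \cite{Bobkov2011} for the original concentration inequality and Fradelizi et al.\ \cite{Fradelizi2016} for the sharp constant. Equality is attained by the exponential distribution, in agreement with Table \ref{tab:distribuzioni varentropy costante}.

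First I would observe that $V(X_t)$ is, by the definition (\ref{eq:varentropia}), the ordinary varentropy of the random variable $X_t$ equipped with its own density $f_t$ from (\ref{eq:Ftft}). Concretely, after the change of variable $x\mapsto y+t$ the integral in (\ref{eq:varentropia}) rewrites as $\int_0^\infty f_t(y)[\log f_t(y)]^2\, dy - [H(X_t)]^2 = \mathrm{Var}[-\log f_t(X_t)]$, which is precisely the varentropy of $X_t$ read in terms of its own density.

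Next, I would verify that $f_t$ is log-concave on $(0,\infty)$. Since $\log f_t(y) = \log f(y+t) - \log \overline{F}(t)$ and translation of a concave function is concave, while subtracting a constant preserves concavity, $\log f_t$ is concave on the support of $X_t$ whenever $\log f$ is concave on the support of $X$. Thus $f_t$ inherits log-concavity from $f$.

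Finally, applying the sharp one-dimensional bound of \cite{Fradelizi2016} to $X_t$ immediately yields $V(X_t) \le 1$. I do not expect any substantive obstacle here: the translation and renormalization passing from $f$ to $f_t$ manifestly preserve log-concavity, and all of the quantitative work is performed by the cited estimate; consistent with this, the exponential row in Table \ref{tab:distribuzioni varentropy costante} shows that the bound is sharp for every $t\in D$.
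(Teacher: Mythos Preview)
Your proposal is correct and follows essentially the same approach as the paper's own proof: you observe that $f_t$ inherits log-concavity from $f$ via (\ref{eq:Ftft}) and then invoke the sharp varentropy bound of Fradelizi et al.\ \cite{Fradelizi2016} for log-concave densities. Your write-up is slightly more explicit about the change of variable and the preservation of concavity under translation and constant shift, but the argument is the same.
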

\begin{proof}
We note that if $f(x)$ is log-concave, then also $f_t(x)$ is log-concave due to (\ref{eq:Ftft}). Hence, 
the proof  is a direct consequence of Theorem 2.3 of Fradelizi {\em et al.}\ \cite{Fradelizi2016}, which 
states that the varentropy of a random lifetime with log-concave pdf is not greater than 1. 
\end{proof}
\par
The following bound is expressed in terms of the weighted residual entropy of $X$, which is a 
weighted version of the residual entropy (\ref{eq:entropia}) and is given by 
(see Di Crescenzo and Longobardi \cite{DiCrLong} for details)
\begin{eqnarray}
 H^w(X_t) \!\!\!\! &=& \!\!\!\! -\int_t^\infty x \,{\frac{f(x)}{\overline{F}(t)}\log{\frac{f(x)}{\overline{F}(t)}} \,{\rm d}x} 
 \nonumber \\
 &=& \!\!\!\! -\frac{1}{\overline{F}(t)} \int_t^\infty  x\,f(x) \log{f(x)} {\rm d}x 
 -\frac{\Lambda(t)}{\overline{F}(t)}\int_t^\infty  x\,f(x) {\rm d}x,  
 \qquad  t\in D.
 \label{eq:HwXt}
\end{eqnarray}
Furthermore, it is based on the so-called vitality function of $X$, i.e.
\begin{equation}
 \delta(t):= \mathbb{E}[X|X>t] = m(t)+t, 
 \qquad  t\in D.
 \label{eq:defdelta}
\end{equation}
Namely, since $X$ denotes the random lifetime of a system, $\delta(t)$ 
can be interpreted as the average life span of a system whose age exceeds $t$. 
\begin{theorem}\label{Th:lowerbound2}
If $X$ is a random lifetime such that its pdf satisfies 
\begin{equation}
 e^{-\alpha x-\beta} \leq f(x)\leq 1 \qquad \forall x\geq 0,
 \label{eq:assfx}
\end{equation}
with $\alpha >0$  and $\beta\geq 0$, then for all $t\geq 0$
\begin{equation}
 V(X_t)\leq \alpha [\Lambda(t) \delta(t)+H^w(X_t)]
 +\beta[ \Lambda(t)+H(X_t)]
 -[ \Lambda(t)+H(X_t)]^2.
 \label{eq:tesi}
\end{equation}
\end{theorem}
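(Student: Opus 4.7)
The plan is to start from the identity
\begin{equation*}
V(X_t) = \frac{1}{\overline{F}(t)}\int_t^\infty f(x)[\log f(x)]^2\,\mathrm{d}x - [\Lambda(t)+H(X_t)]^2,
\end{equation*}
given in (\ref{eq:varentropia}), and to bound the integral on the right-hand side using the hypothesis \eqref{eq:assfx}. The key observation is that the two-sided bound on $f$ translates into a one-sided bound on $-\log f$: since $f(x)\le 1$ we have $-\log f(x)\ge 0$, and since $f(x)\ge e^{-\alpha x-\beta}$ we have $-\log f(x)\le \alpha x+\beta$. Multiplying these two inequalities gives the pointwise estimate
\begin{equation*}
[\log f(x)]^2 \;=\; (-\log f(x))^2 \;\le\; (\alpha x+\beta)(-\log f(x)), \qquad x\ge 0.
\end{equation*}

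Next, I would integrate this inequality against $f(x)\,\mathrm{d}x$ over $[t,\infty)$ and split the resulting right-hand side into two pieces, corresponding to the terms $\alpha x$ and $\beta$. For the $\beta$-piece, the alternative form \eqref{eq:entropiaalt} of the residual entropy gives
\begin{equation*}
-\int_t^\infty f(x)\log f(x)\,\mathrm{d}x = \overline{F}(t)\,[\Lambda(t)+H(X_t)].
\end{equation*}
For the $\alpha$-piece, the definition \eqref{eq:HwXt} of the weighted residual entropy, combined with the fact that $\int_t^\infty x\,f(x)\,\mathrm{d}x = \overline{F}(t)\,\delta(t)$ (from \eqref{eq:defdelta}), yields
\begin{equation*}
-\int_t^\infty x\,f(x)\log f(x)\,\mathrm{d}x = \overline{F}(t)\,[H^w(X_t)+\Lambda(t)\,\delta(t)].
\end{equation*}

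Dividing through by $\overline{F}(t)$ and substituting into (\ref{eq:varentropia}) then produces exactly the asserted bound \eqref{eq:tesi}. There is no real obstacle here; the only subtlety is recognizing that one should use the factorization $[\log f]^2 = (-\log f)\cdot(-\log f)$ and replace one of the two factors by its linear upper envelope $\alpha x+\beta$, which is what converts a quadratic quantity in $\log f$ into linear functionals that can be matched with $H(X_t)$ and $H^w(X_t)$. All other steps are direct bookkeeping using the identities recalled in Sections 2 and 3.
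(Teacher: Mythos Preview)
Your proposal is correct and follows essentially the same route as the paper's proof: start from the identity in \eqref{eq:varentropia}, use the two-sided bound \eqref{eq:assfx} to obtain the pointwise inequality $[\log f(x)]^2\le (\alpha x+\beta)(-\log f(x))$, then identify the resulting integrals with $H(X_t)$ and $H^w(X_t)$ via \eqref{eq:entropiaalt} and \eqref{eq:HwXt}. If anything, you are more explicit than the paper about why the key pointwise inequality holds---the paper simply writes ``due to \eqref{eq:assfx} one has'' and jumps directly to the integrated bound \eqref{eq:b1}.
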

\begin{proof}
From Eq.\ (\ref{eq:varentropia}), due to (\ref{eq:assfx}) one has 
\begin{equation}
V(X_t) 
 \leq - \frac{1}{\overline{F}(t)}\int_t^\infty (\alpha x+\beta)
 f(x) \log{f(x)} {\rm d}x - \left[\Lambda(t)+H(X_t)\right]^2,
 \qquad t\geq 0.
 \label{eq:b1}
\end{equation}
We note that Eqs.\ (\ref{eq:mrl}) and (\ref{eq:defdelta}) give
\begin{equation*}
 \int_t^\infty  x\,f(x) {\rm d}x 
 = \overline{F}(t)  \delta(t),
 \qquad t\geq 0.
\end{equation*}
Hence, recalling (\ref{eq:Lambda}) and (\ref{eq:defdelta}), Eq.\ (\ref{eq:HwXt}) implies:
\begin{equation}
 \int_t^\infty  x\,f(x) \log{f(x)} {\rm d}x 
 = -\overline{F}(t) [\Lambda(t)\delta(t)+H^w(X_t)],
 \qquad t\geq 0.
 \label{eq:b2}
\end{equation}
Moreover, from (\ref{eq:entropiaalt}), we have 
\begin{equation}
 \int_t^\infty  f(x) \log{f(x)} {\rm d}x 
 = -\overline{F}(t) [\Lambda(t)+H(X_t)],
 \qquad t\geq 0.
 \label{eq:b3}
\end{equation}
Finally, substituting (\ref{eq:b2}) and (\ref{eq:b3}) in  (\ref{eq:b1}), we immediately obtain 
the inequality (\ref{eq:tesi}). 
\end{proof}
%
\section{Some applications}
In this section, we consider some applications of the residual varentropy.  
We first deal with the proportional hazard rates model, which in turn 
can be employed to the reliability analysis of series systems. A further case of interest 
is concerning the first-passage-time problem of an Ornstein-Uhlenbeck jump-diffusion process 
which arises as a limit of the continuous-time Ehrenfest model. 
\subsection{Proportional hazards model}
Consider a family of absolutely continuous nonnegative 
random variables $\{X^{(a)}; a>0 \}$, where the survival function 
and the pdf of $X^{(a)}$ are expressed, respectively, as 
\begin{equation}\label{eq:prohrm}
 \overline{F}^{(a)}(t)=\mathbb{P}[X^{(a)}>t]=[\overline{F}(t)]^a, 
 \quad 
 f^{(a)}(t)=a[\overline{F}(t)]^{a-1} f(t), 
 \qquad   
 t>0,
\end{equation}
with $\overline{F}(t)$ a suitable baseline survival function and  $f(t)=-\frac{\mathrm{d}}{\mathrm{d} t} \overline{F}(t)$ 
the associated pdf. This model is known as the proportional hazards model, 
see Cox \cite{Cox1959}, since the hazard rate function of $X^{(a)}$ is proportional to the hazard rate 
corresponding to the baseline survival function. For instance, see Parsa {\em et al.}\ \cite{Parsa2018} 
for a recent characterization of the proportional hazards model in terms of the Gini-type index. 
\par
Let us now address the problem of evaluating the residual varentropy for the model (\ref{eq:prohrm}) 
when $X^{(a)}$ is a random lifetime. First, noting that the cumulative hazard rate function is given by 
\begin{equation}\label{eq:cumulativa hazard rate sistema serie}
 \Lambda^{(a)}(t)=-\log{\overline{F}^{(a)}(t)}=a \, \Lambda(t), \qquad t>0,
\end{equation}
from (\ref{eq:entropiaalt}), it is not hard to see that the residual entropy of $X^{(a)}$ is expressed as  
\begin{eqnarray}\label{eq:entropia sistema serie}
 H(X_t^{(a)}) \!\!\!\! &=& \!\!\!\! -\Lambda^{(a)}(t)-\frac{1}{[\overline{F}(t)]^a}\int_t^\infty{f^{(a)}(x)\log{f^{(a)}(x)}}\,{\rm d}x  
 \nonumber \\
 &=& \!\!\!\! -a \,\Lambda(t)-\frac{1}{[\overline{F}(t)]^a}\int_0^{[\overline F(t)]^a}  \ell(y;a)\,{\rm d}y, \qquad t>0,
\end{eqnarray}
with $y=[\overline{F}(x)]^a$, and where 
\begin{equation}\label{eq:defell}
 \ell(y;a) := \log{\left\{a \, y^{1-1/a} \, f[\overline{F}^{-1}(y^{1/a})]\right\}},
 \qquad 0<y<1.
\end{equation}
Hence, recalling (\ref{eq:varentropia}), from (\ref{eq:cumulativa hazard rate sistema serie}) and 
(\ref{eq:entropia sistema serie}) after some calculations, 
we obtain the residual varentropy of $X^{(a)}$, for $t>0$: 
\begin{eqnarray}\label{eq:varentropy sistema serie}
 V(X_t^{(a)}) \!\!\!\!
 & =& \!\!\!\!
 \frac{\int_t^\infty{f^{(a)}(x)[\log{f^{(a)}(x)}}]^2\,{\rm d}x}{[\overline{F}(t)]^a}   
    -\left[\frac{\int_t^\infty{f^{(a)}(x)\log{f^{(a)}(x)}}\,{\rm d}x}{[\overline{F}(t)]^a} \right]^2
 \nonumber 
 \\
  & =& \!\!\!\! \frac{1} {[\overline{F}(t)]^a} \int_0^{[\overline{F}(t)]^a}   [\ell(y;a)]^2  \mathrm{d}y  
    -\left\{\frac{1}{ [\overline{F}(t)]^a}\int_0^{ [\overline{F}(t)]^a} \ell(y;a) \, \mathrm{d}y\right\}^2.\quad 
 \end{eqnarray}
Making  use of Eqs.\ \eqref{eq:hazard rate} and  \eqref{eq:Lambda}, one has $f(x)=\lambda(x) e^{-\Lambda(x)}$, 
so that the function introduced in (\ref{eq:defell}) can be rewritten also as follows: 
$$
 \ell(y;a)=\log{\Big\{a y \lambda \Big(\Lambda^{-1}\Big(-\frac{1}{a}\log{y}\Big)\Big)\Big\}}. 
$$
\par
An application can be immediately given to series systems. 
\begin{example}\label{ex:series} \rm 
Consider a system composed of $n$ units in series and characterized by i.i.d.\ random lifetimes 
$X_1, \ldots, X_n$. Let the survival function of each unit  be denoted with $\overline{F}(t)=\mathbb{P}(X_i>t)$. 
Since the system lifetime is given by  $X^{(n)}=\min\{X_1, \ldots, X_n\}$, 
the model  of  series system satisfies the  proportional
hazards model specified in (\ref{eq:prohrm}), for $a=n\in \mathbb{N}$. 
\par
For an illustrative example, we assume that the random lifetimes $X_i$ have generalized exponential 
distribution with survival function $\overline F(t)=1 - (1 - e^{-\lambda t})^b$, $t\geq 0$, for $b>0$. 
(We recall that this distribution plays a role in the construction of probabilistic models for damped random 
motions with finite velocities \cite{DiCrMart}). From (\ref{eq:defell}), thus  we  have 
$$
   \ell(y;a)  = \log \left\{a b \lambda  y^{1-\frac{1}{a}} (1-y)^{1-\frac{1}{b}}
   \big[1-(1-y)^{\frac{1}{b}}\big]
   \right\}, \qquad 0<y<1.
$$
From Eq.\ (\ref{eq:varentropy sistema serie}), we come to the  residual varentropy of the 
system lifetime $X^{(n)}$. The expression of $V(X_t^{(a)})$ cannot be obtained in closed form, 
but it can be evaluated via numerical computations. Figure \ref{fig:PRH} shows some plots of 
the  residual varentropy for some choices of $a=n$. It is clear that the varentropy increases when
the number of units grows, and generally when  $t$ becomes larger. 
%
\begin{figure}[t]  
\centering
\includegraphics[scale=0.31]{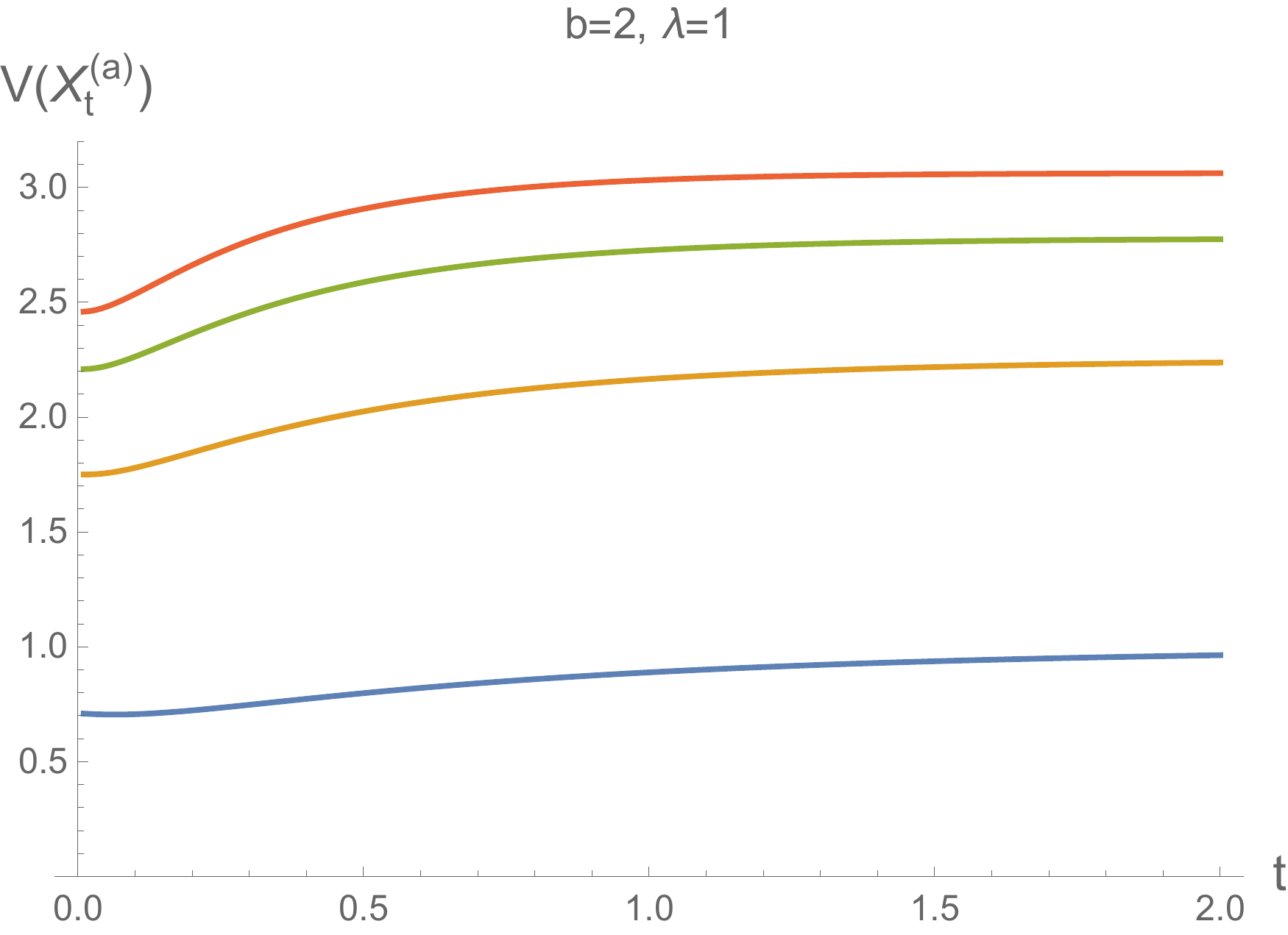}
$\,$
\includegraphics[scale=0.31]{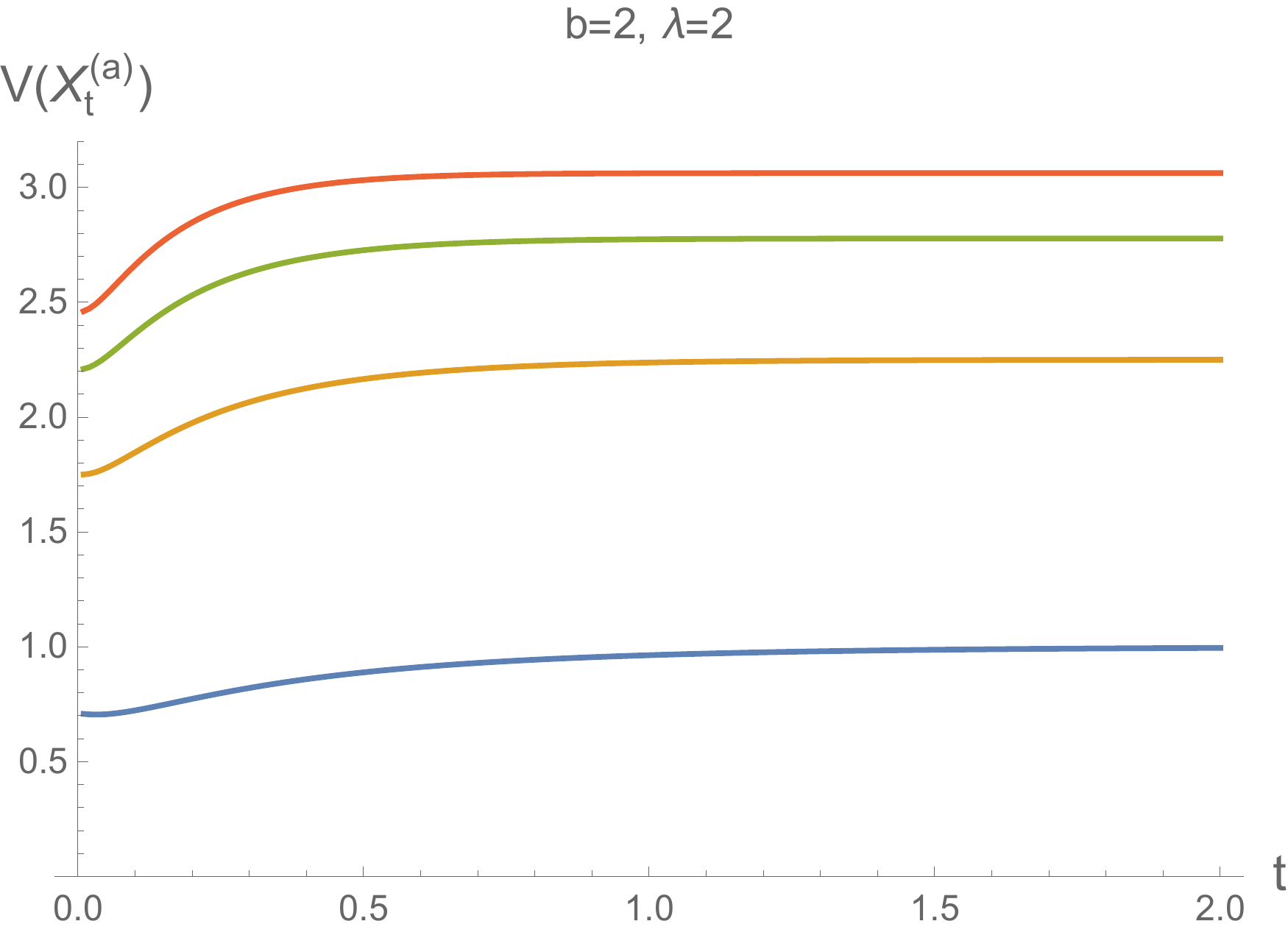}
%

\vspace{0.5cm}

\includegraphics[scale=0.31]{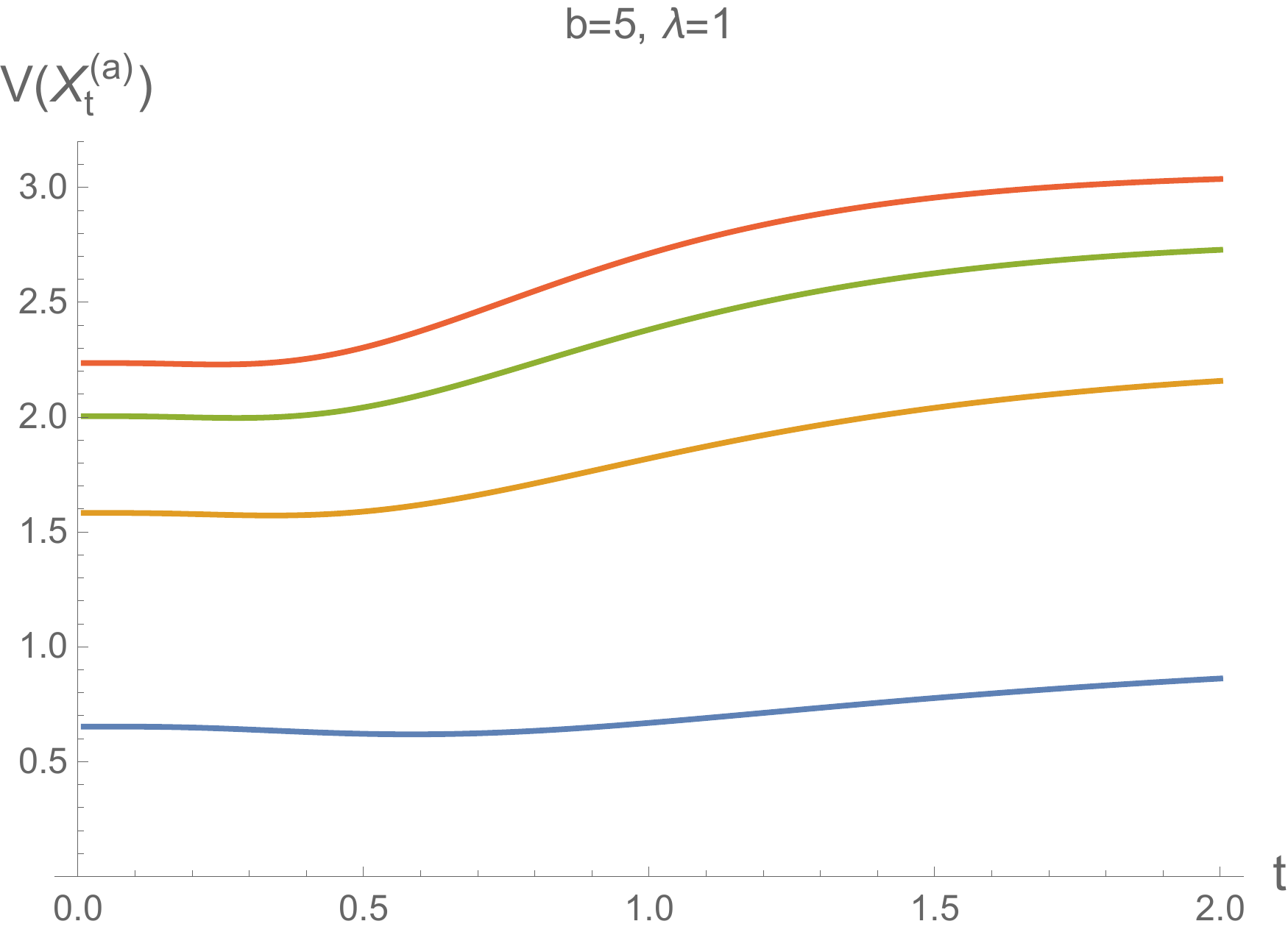}
$\,$
\includegraphics[scale=0.31]{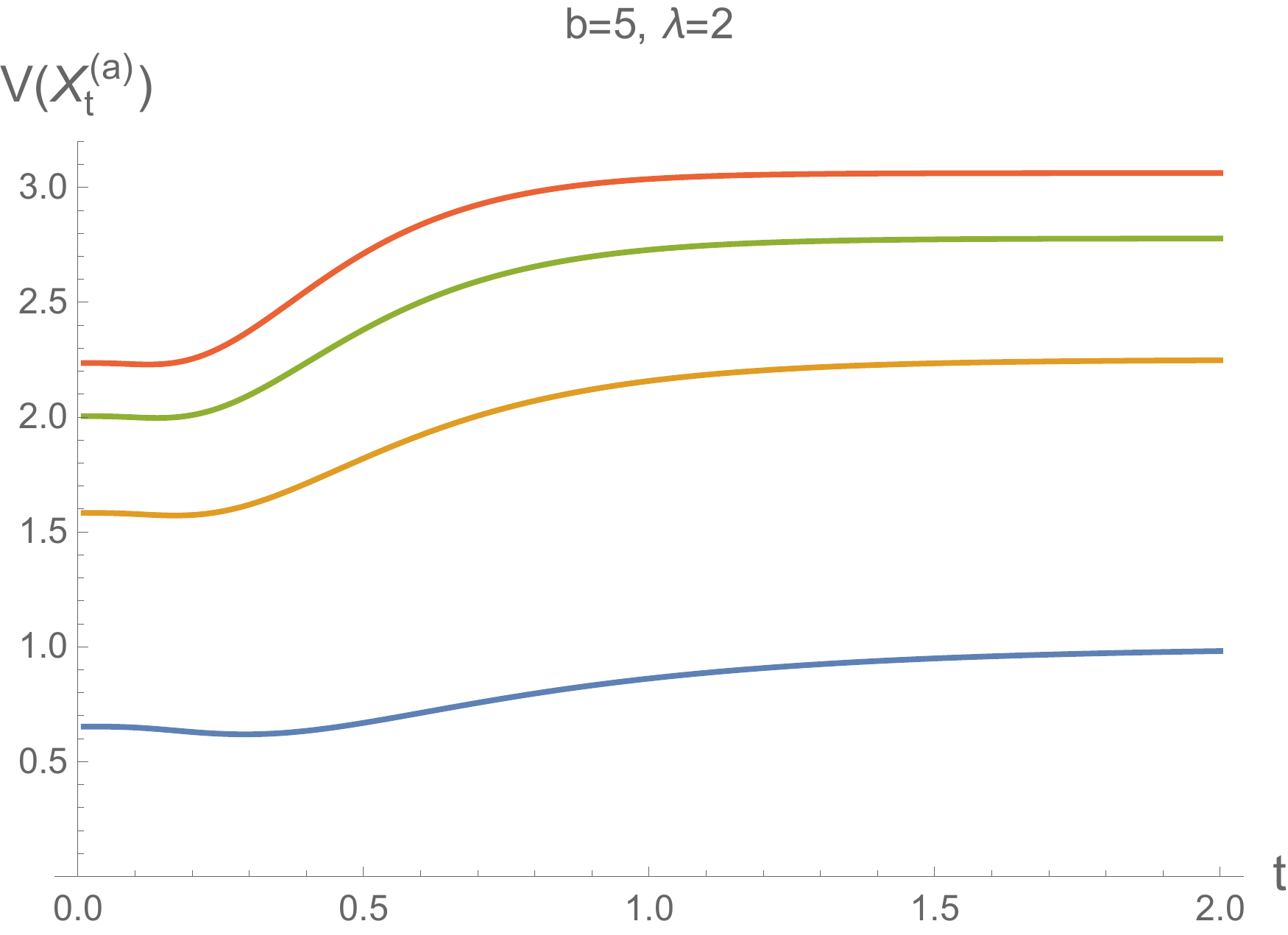}
%
\caption{The  residual varentropy of $X_t^{(a)}$ for the series system of Example \ref{ex:series}, 
for $a=n=1,2,3,4$ (from bottom to top) and for $b$ and $\lambda$ as indicated.}
\label{fig:PRH}
\end{figure}
%
\end{example}
\begin{example} \rm
Under the proportional hazards model, Eq.\ (\ref{eq:varentropy sistema serie}) 
can be used to construct time-varying reference sets for the information content of the residual lifetime (\ref{eq:Xt}). 
Specifically, we determine intervals of the form 
\begin{equation}
 H(X_t^{(a)})\pm k \sqrt{V(X_t^{(a)})} = \mathbb{E}[IC(X_t^{(a)})]\pm k \sqrt{ {\rm Var}[IC(X_t^{(a)})]},
 \qquad k=2,3
 \label{eq:interv}
\end{equation}
for suitable baseline distributions (Weibull, gamma and lognormal). Since closed forms are not available, 
we illustrate such results with some graphics given in Figure \ref{fig:interv}. For comparison purposes, 
the relevant parameters are chosen in order that the baseline distributions have unity means. 
%
\begin{figure}[t]  
\centering
\includegraphics[scale=0.35]{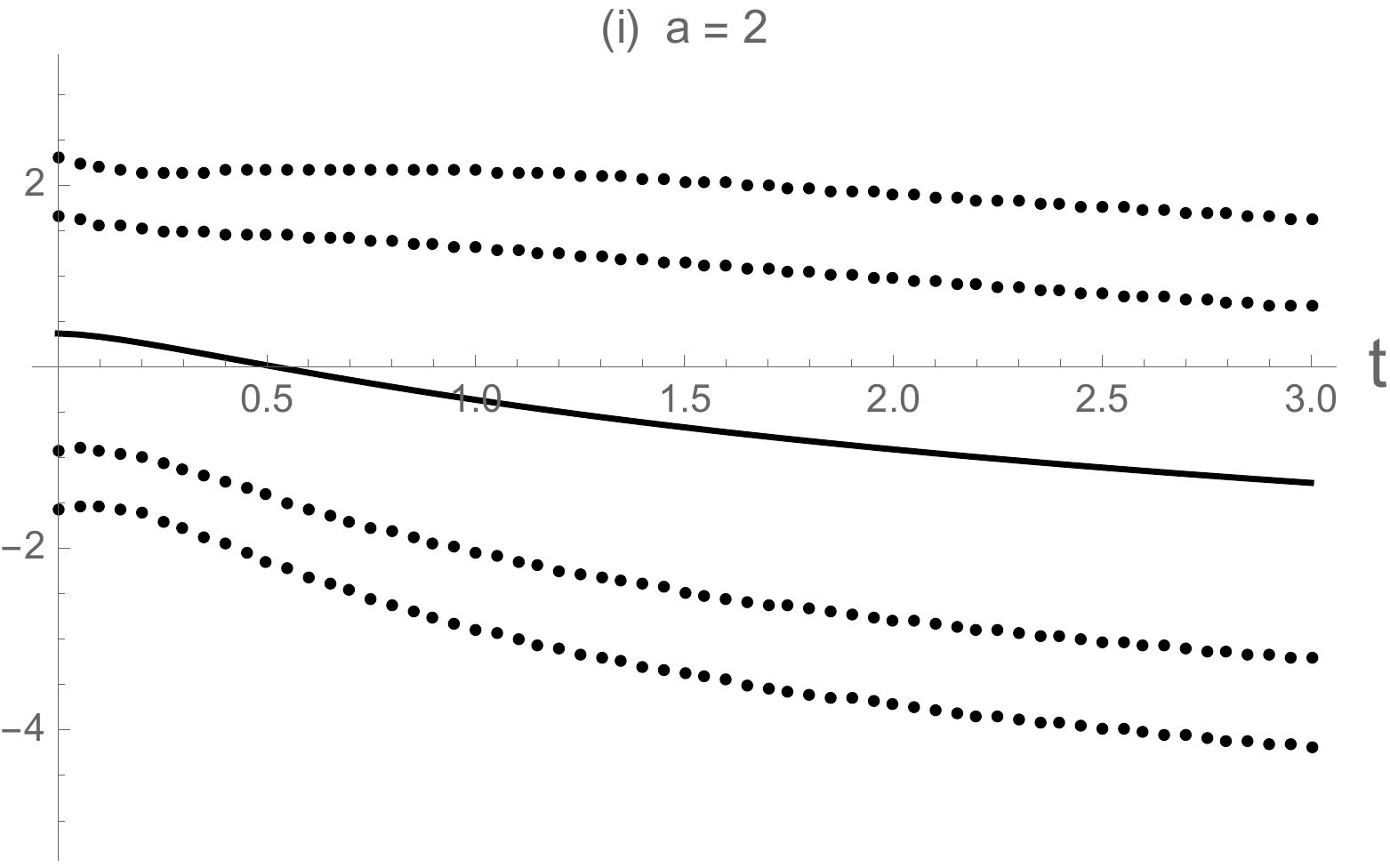}
$\,$
\includegraphics[scale=0.35]{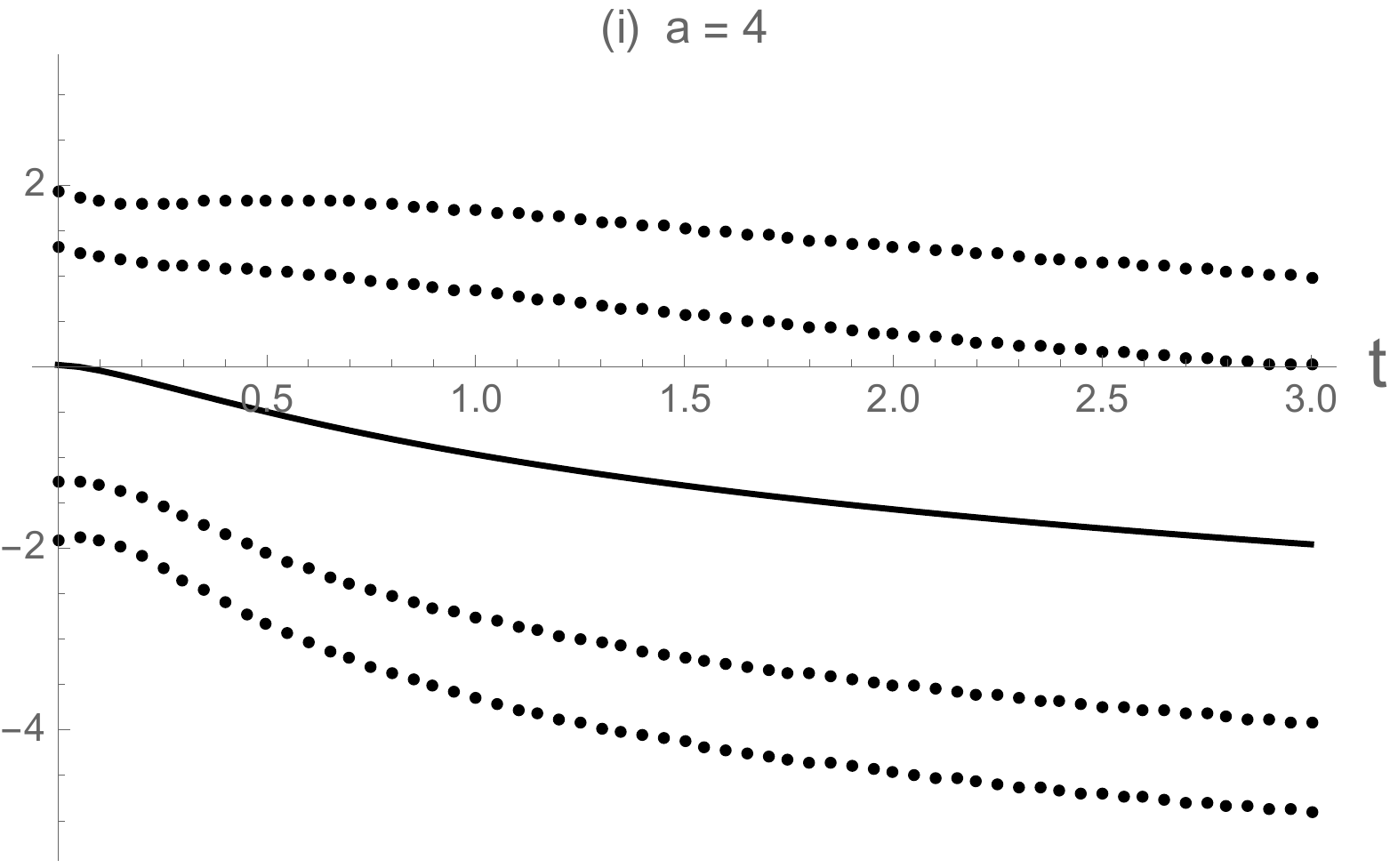}

\vspace{0.5cm}

\includegraphics[scale=0.35]{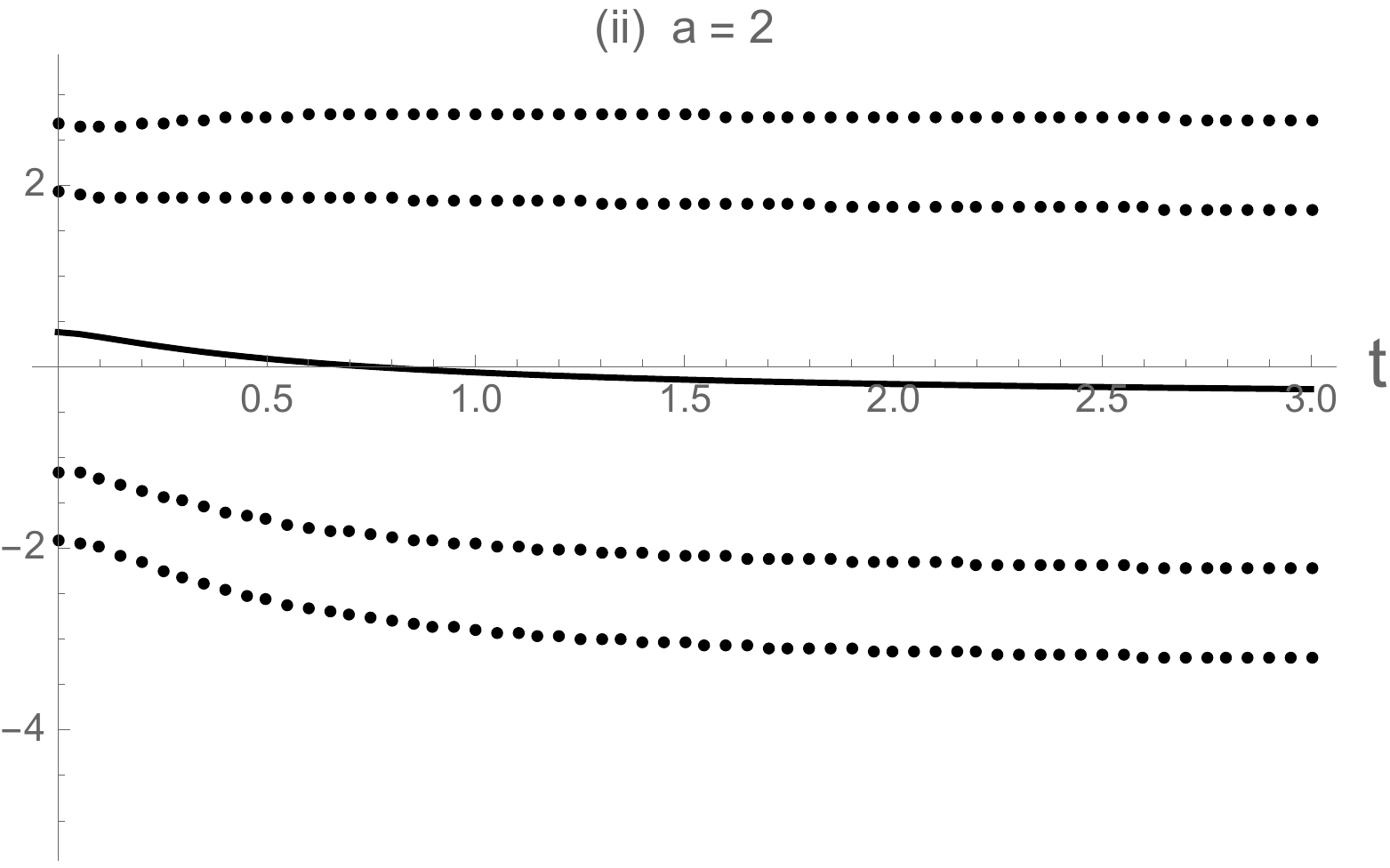}
$\,$
\includegraphics[scale=0.35]{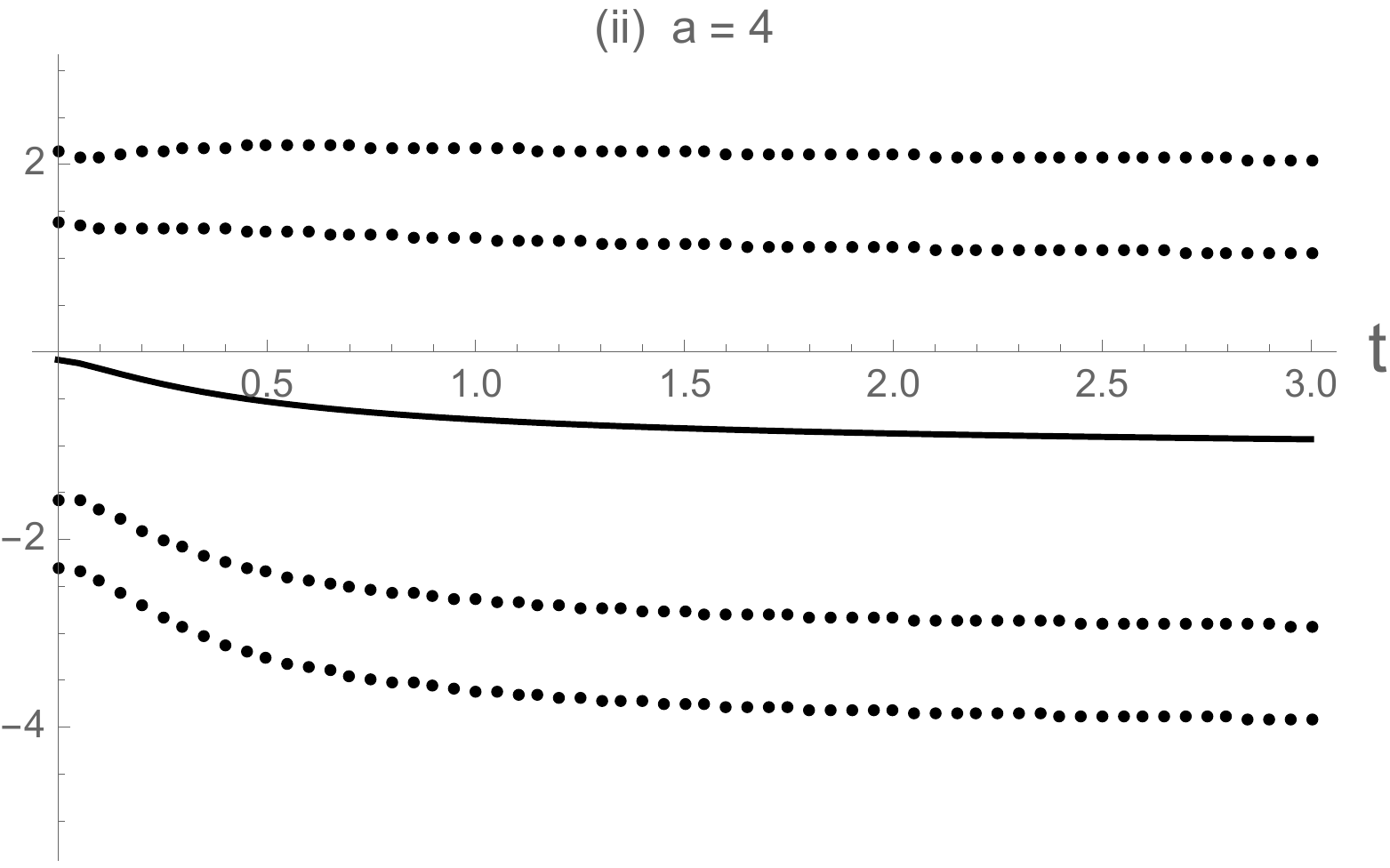}

\vspace{0.5cm}

\includegraphics[scale=0.35]{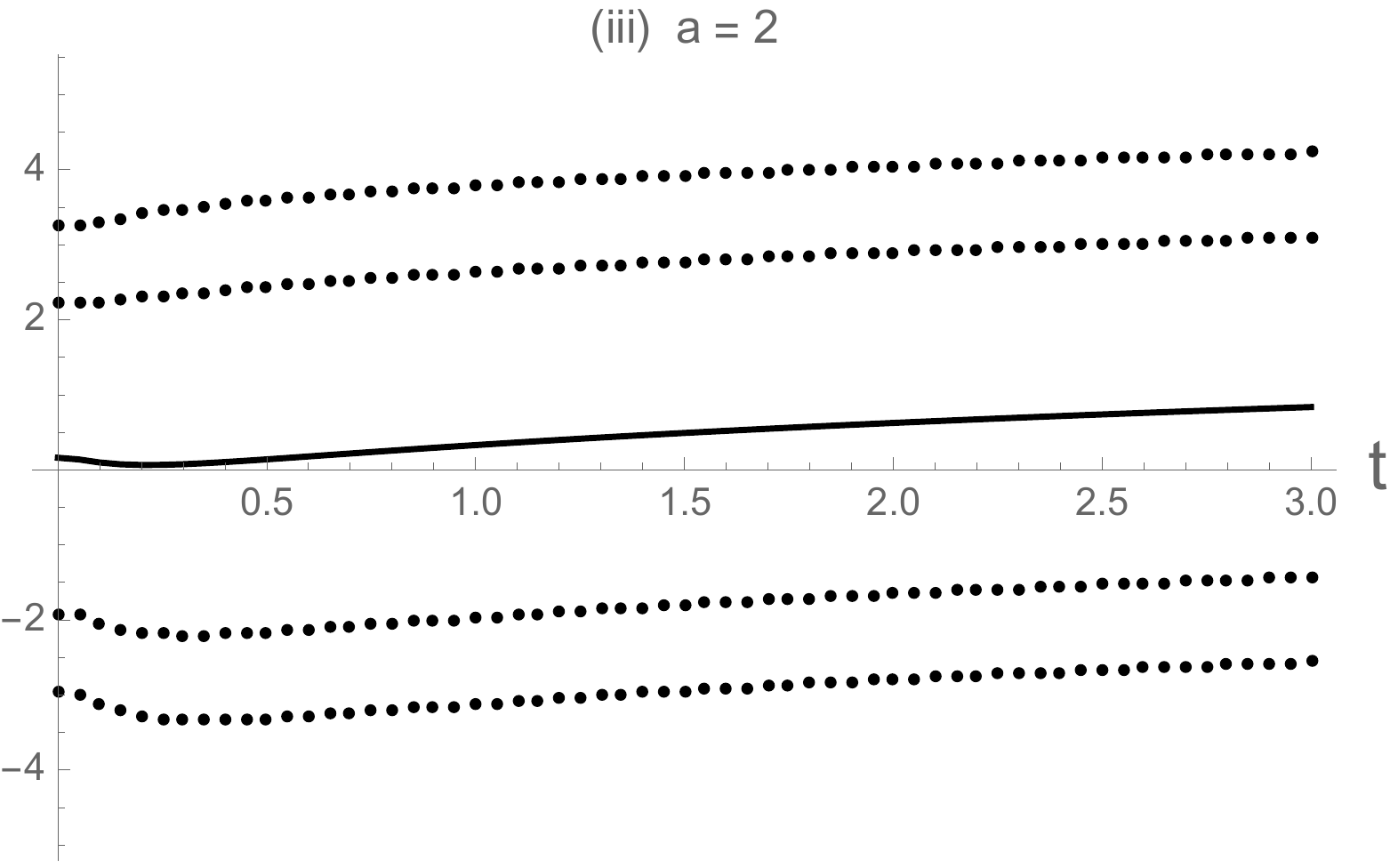}
$\,$
\includegraphics[scale=0.35]{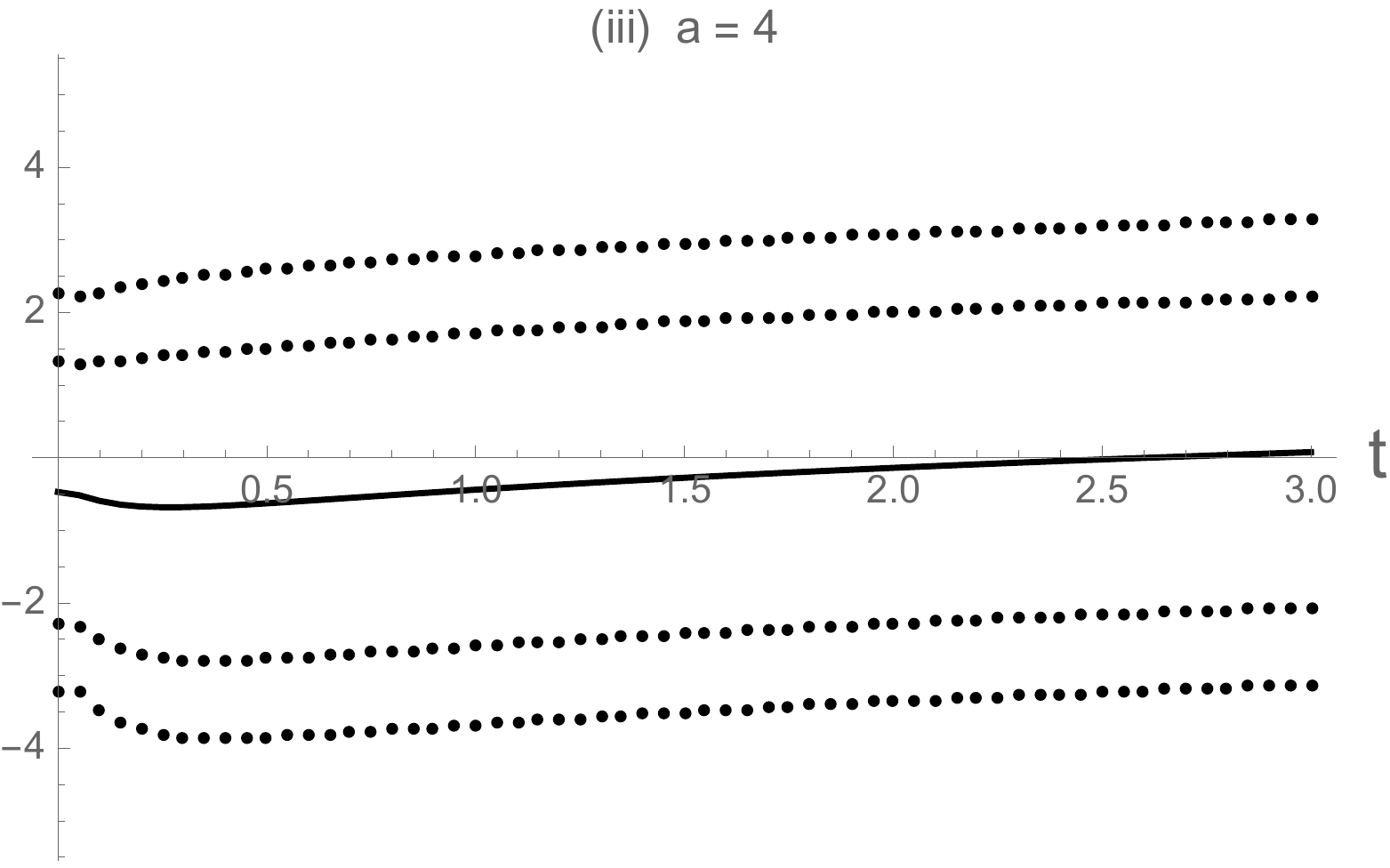}
\caption{
Residual entropy $H(X_t^{(a)})$ (full) and extremes of the intervals (\ref{eq:interv}) 
(dotted) with $a=2$ (left) and $a=4$ (right), for the following baseline pdfs: 
\\
(i) \ (Weibull)  $f(t)=\frac{k}{\lambda} \left(\frac{t}{\lambda}\right)^{k-1}\exp\{-\left(\frac{t}{\lambda}\right)^k\}$, $t> 0$, 
for $k = 2$, $\lambda=\frac{2}{\pi}$; 
\\
(ii) \ (gamma)  $f(t)=\frac{1}{\theta} \left(\frac{t}{\theta}\right)^{r-1}\exp\left\{-\frac{t}{\theta}\right\} \frac{1}{\Gamma(r)}$, 
$t> 0$, for $r = 2$, $\theta=\frac{1}{2}$; 
\\
(iii) \ (lognormal) $f(t)= \frac{1}{\sqrt {2\pi} \sigma t} \exp\left\{-\frac{(\log t-\mu )^{2}}{2\sigma^2}\right\}$, 
$t>0$,  for $\mu=-\frac{1}{2}$, $\sigma=1$.}
\label{fig:interv}
\end{figure}

\end{example}
%
\subsection{First-passage times of an Ornstein-Uhlenbeck jump-diffusion process}
The continuous-time Ehrenfest model describes a simple diffusion process as a suitable Markov chain, 
where molecules of a gas diffuse at random in a container divided into two equal parts by a permeable membrane. 
Recently, Dharmaraja {\em et al.}\ \cite{Dharmaraja} proposed an extension of such stochastic system that 
includes the occurrence of stochastic resets, also named `catastrophes', i.e.\ instantaneous transitions to the state 
zero at constant rate $\xi>0$. A jump-diffusion approximation was considered under a suitable scaling procedure.  
Specifically,  
the resulting jump-diffusion process, say $\{X(t), t\geq 0\}$, consists in a mean-reverting time-homogenous 
Ornstein-Uhlenbeck process with catastrophes (occurring with rate $\xi$), having state-space $\mathbb{R}$, 
with drift and infinitesimal variance given by 
$$
 A_1(x)=-\alpha x, \qquad A_2(x)=\alpha \nu \qquad (\alpha>0, \nu>0).
$$ 
In this case, denoting by $f(t)$ the first-passage-time (FPT) pdf of $X(t)$ through 0, with $X(0)=y\neq 0$, we have 
(cf.\ Eq.\ (49) of \cite{Dharmaraja})
\begin{equation} 
 f(t)=e^{-\xi t}\,\widetilde f(t)+\xi\,e^{-\xi t}{\rm Erf}\left(|y| e^{-\alpha t}\,
  [\nu(1-e^{-2\alpha t})]^{-1/2}\right),
 \qquad t> 0,
 \label{g_SC}
\end{equation}
with  $f(0)=\xi$, where ${\rm Erf}(\cdot)$ is the error function, and where (cf.\ Eq.\ (38) of \cite{Dharmaraja}) 
$$
 \widetilde f(t)={2\alpha|y|e^{-\alpha t}\over \sqrt {\pi\nu} \left(1-e^{-2\alpha t}\right)^{3/2}}
 \exp\biggl\{-\,{ y^2e^{-2\alpha t}\over \nu(1-e^{-2\alpha t})}\biggr\},
 \qquad  t > 0,
$$
with $ \widetilde f(0)=0$, is the FPT pdf of the corresponding diffusion process in absence of catastrophes.
We recall that  the FPT pdf (\ref{g_SC}) deserves interest in the realm of stochastic processes  
with stochastic reset (see, for instance, Kusmierz {\em et al}.\ \cite{Kusmierz2014} and Pal \cite{Pal2015}). 
To analyze the relevant information content, Figures \ref{fig:entropiaEsOUa} 
and \ref{fig:entropiaEsOUb} show some instances of the residual entropy related to pdf (\ref{g_SC}), 
whereas the corresponding residual varentropy is provided in Figures \ref{fig:ventropiaEsOUa} and \ref{fig:ventropiaEsOUb}. 
It is shown that the residual entropy is decreasing in $\xi$ and in $\nu$; moreover, it tends to a constant 
when $t$ grows, such limit being decreasing in $\xi$ and constant in $\nu$. 
The residual varentropy exhibits a different behavior, since it is decreasing in $\xi$ and is increasing in $\nu$ for 
sufficiently large values of $t$. Moreover, it tends to an identical limit when $t$ grows.  
This latter property is confirmed by extensive computations performed for various choices of the parameters.
%
\begin{figure}[t]  
\centering
\includegraphics[scale=0.45]{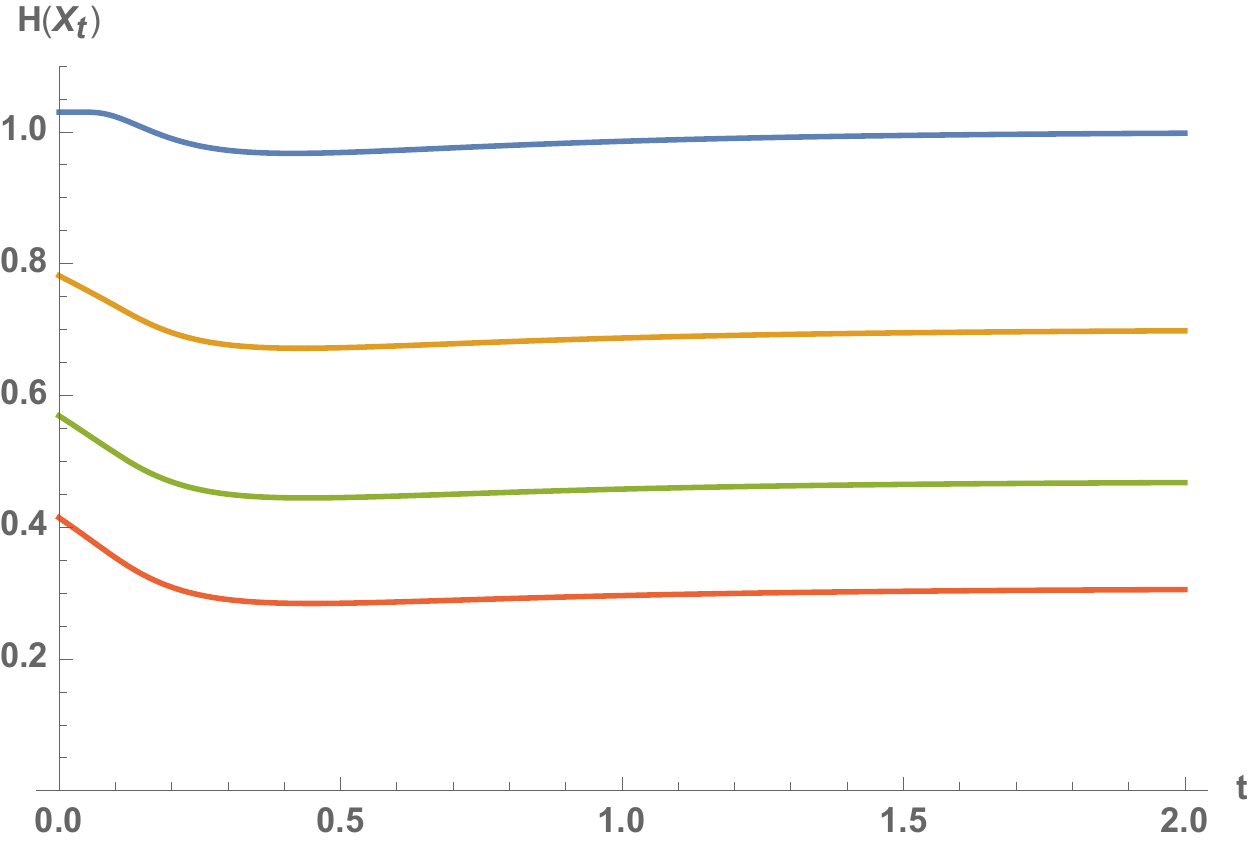}
$\;$
\includegraphics[scale=0.45]{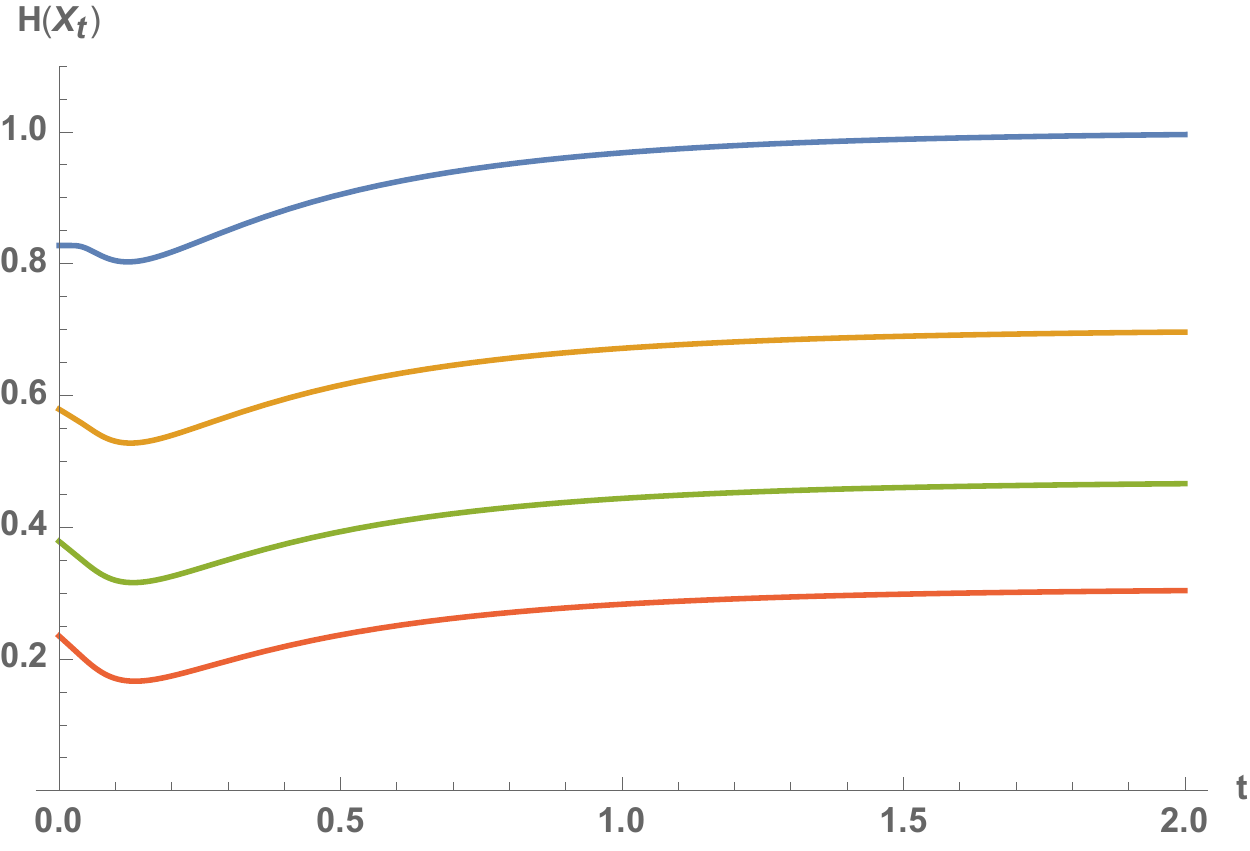}
\caption{Residual entropy for the FPT pdf (\ref{g_SC}), when $y=1$, $\alpha=1$, $\nu=1$ (left), $\nu=2$ (right), 
and $\xi=0$, $0.35$, $0.7$, $1$ (from top to bottom).}
\label{fig:entropiaEsOUa}
\end{figure}
\begin{figure}[t]  
\centering
\includegraphics[scale=0.45]{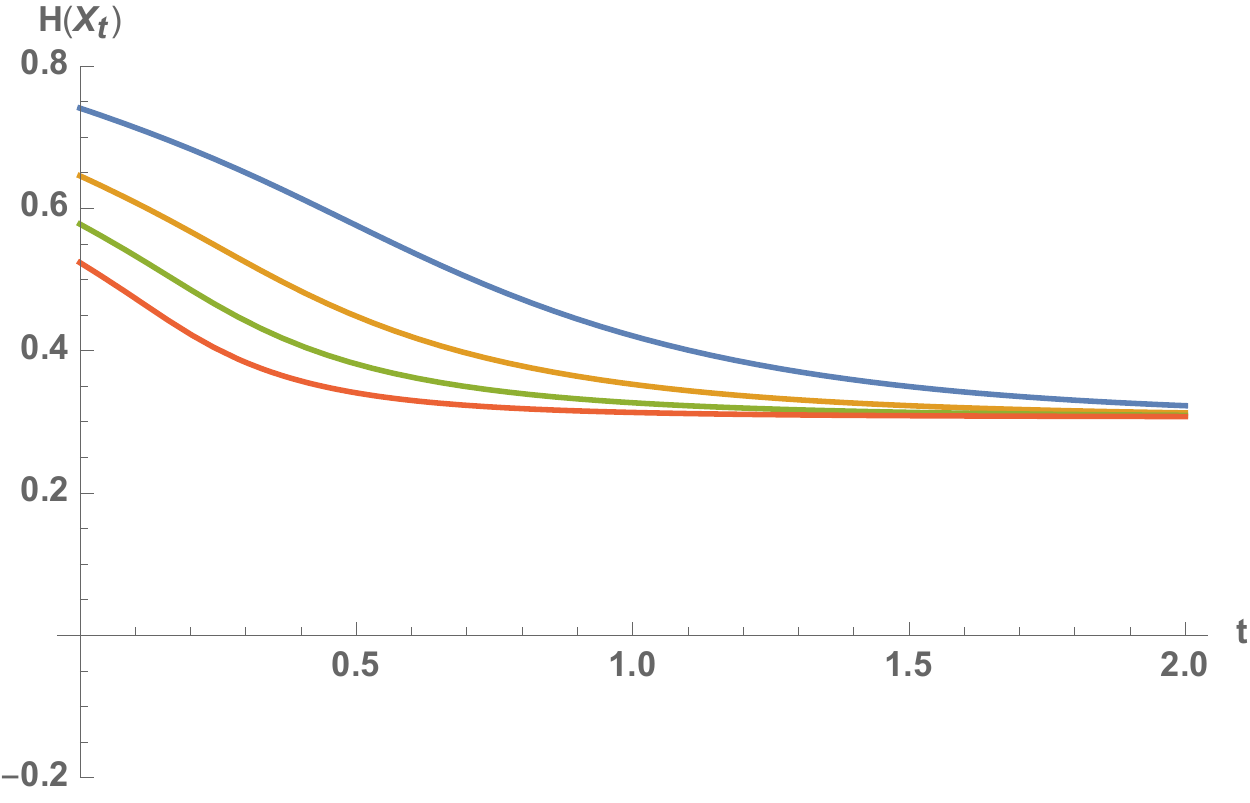}
$\;$
\includegraphics[scale=0.45]{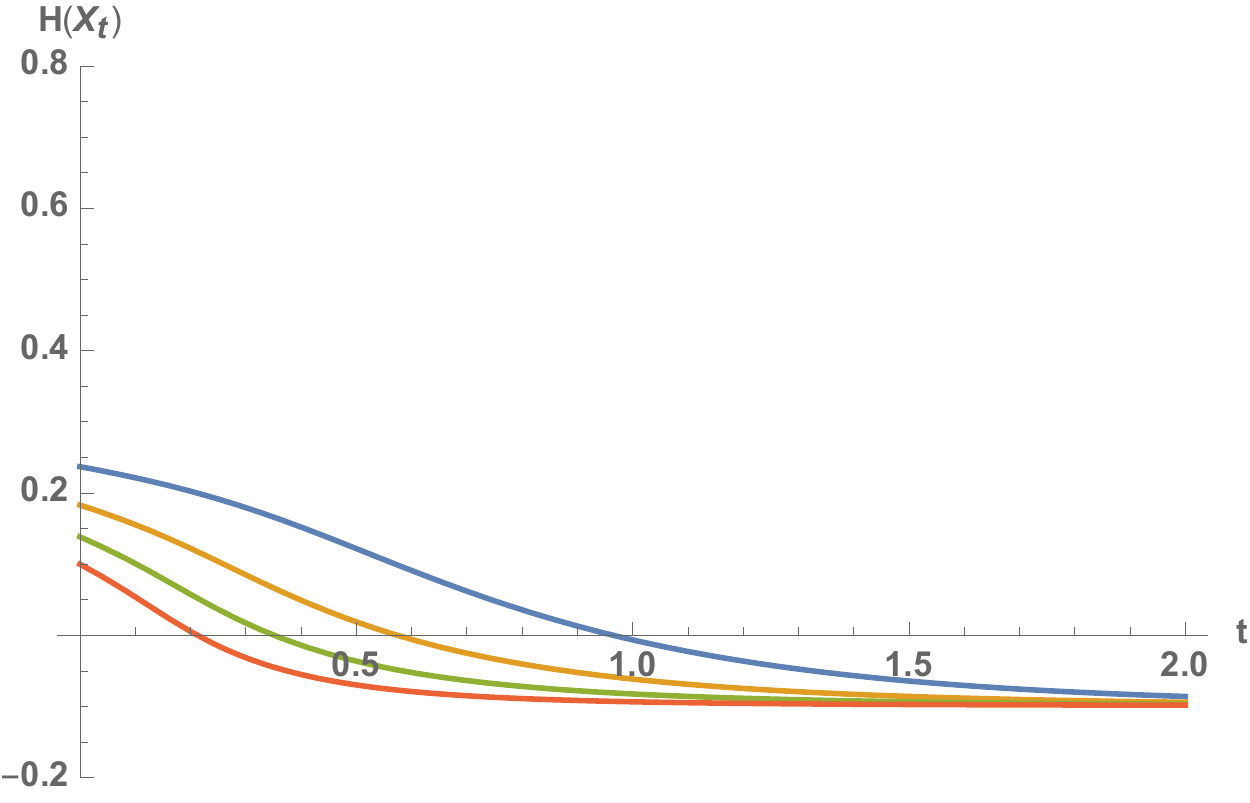}
\caption{Same as Figure \ref{fig:entropiaEsOUa}, for $\xi=1$ (left),  $\xi=2$ (right), 
and $\nu=0.15$, $0.3$, $0.45$, $0.6$ (from top to bottom).}
\label{fig:entropiaEsOUb}
\end{figure}
%
\begin{figure}[t]  
\centering
\includegraphics[scale=0.45]{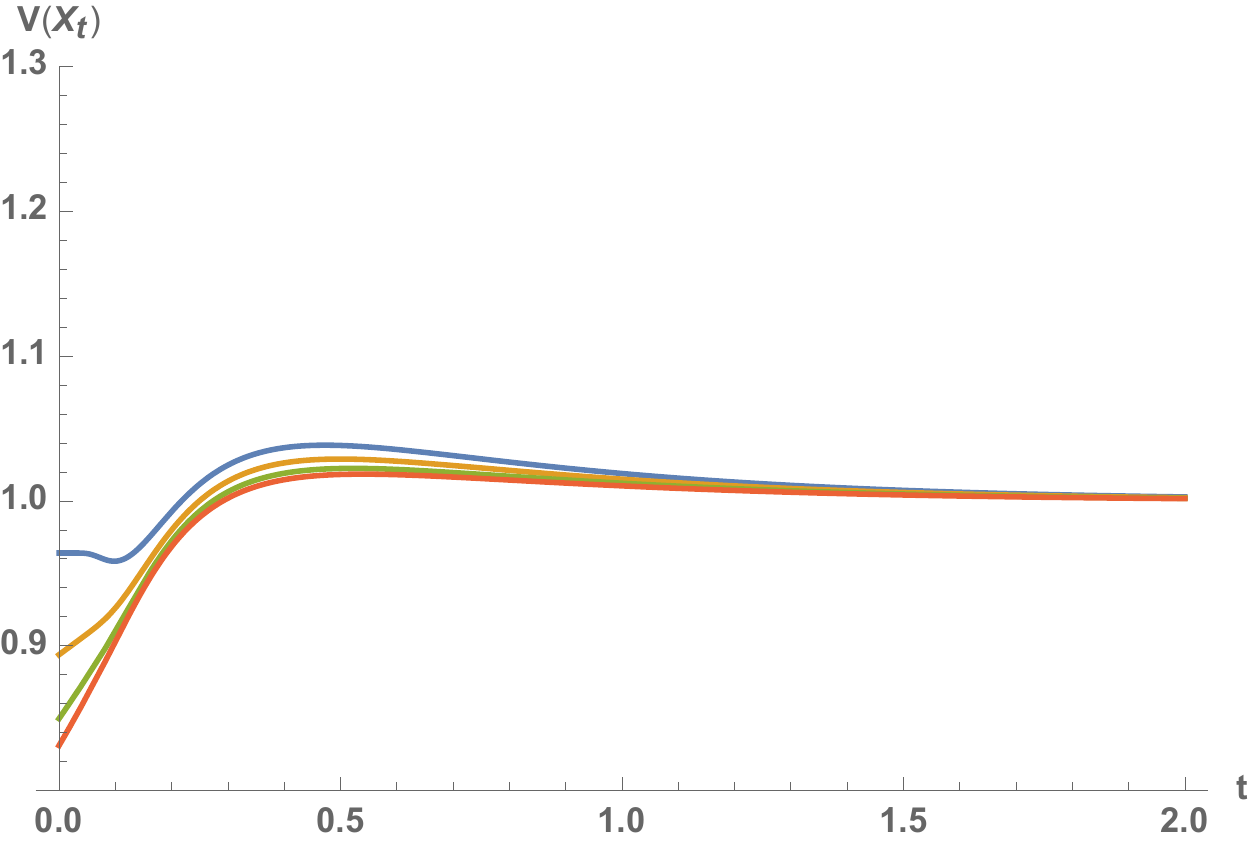}
$\;$
\includegraphics[scale=0.45]{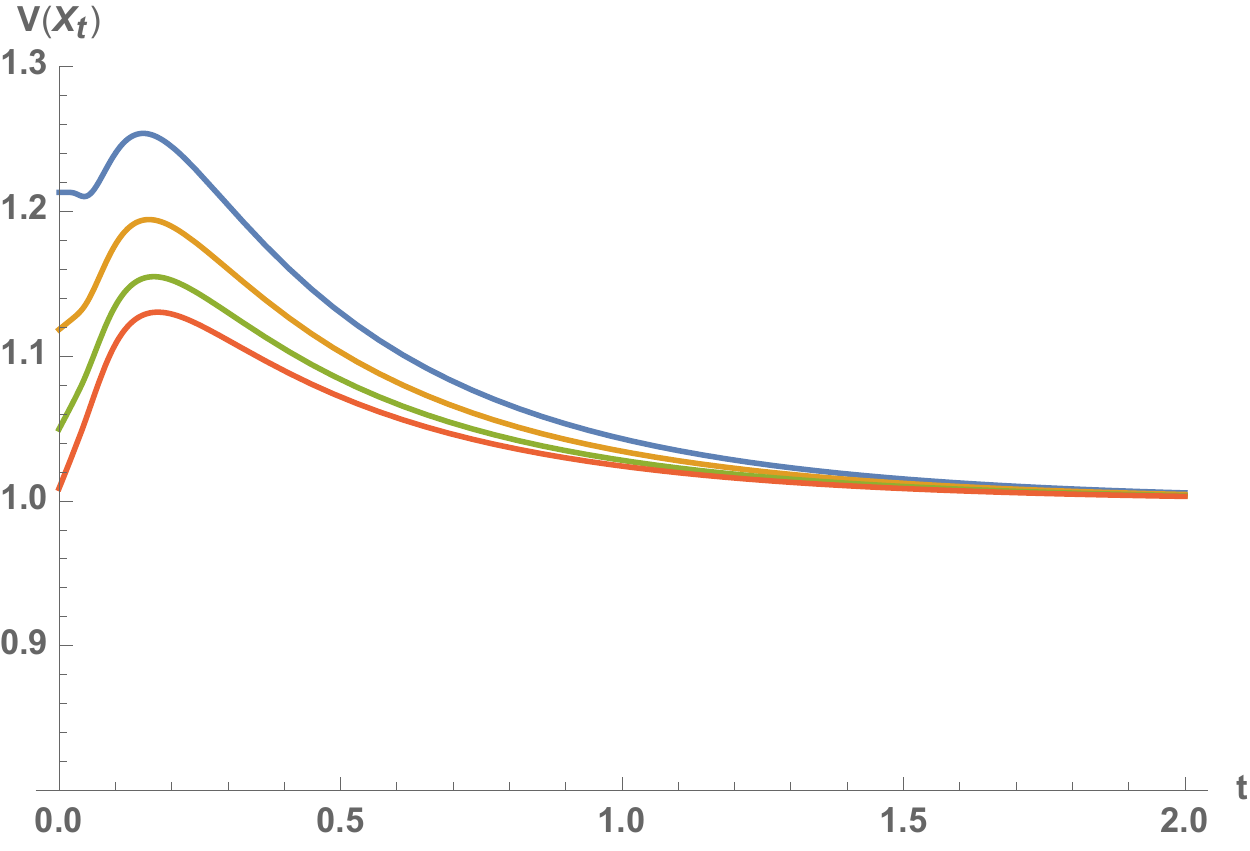}
\caption{Residual varentropy for the same cases of Figure \ref{fig:entropiaEsOUa}, with 
$\xi=0$, $0.35$, $0.7$, $1$ (from top to bottom).}
\label{fig:ventropiaEsOUa}
\end{figure}
\begin{figure}[t]  
\centering
\includegraphics[scale=0.45]{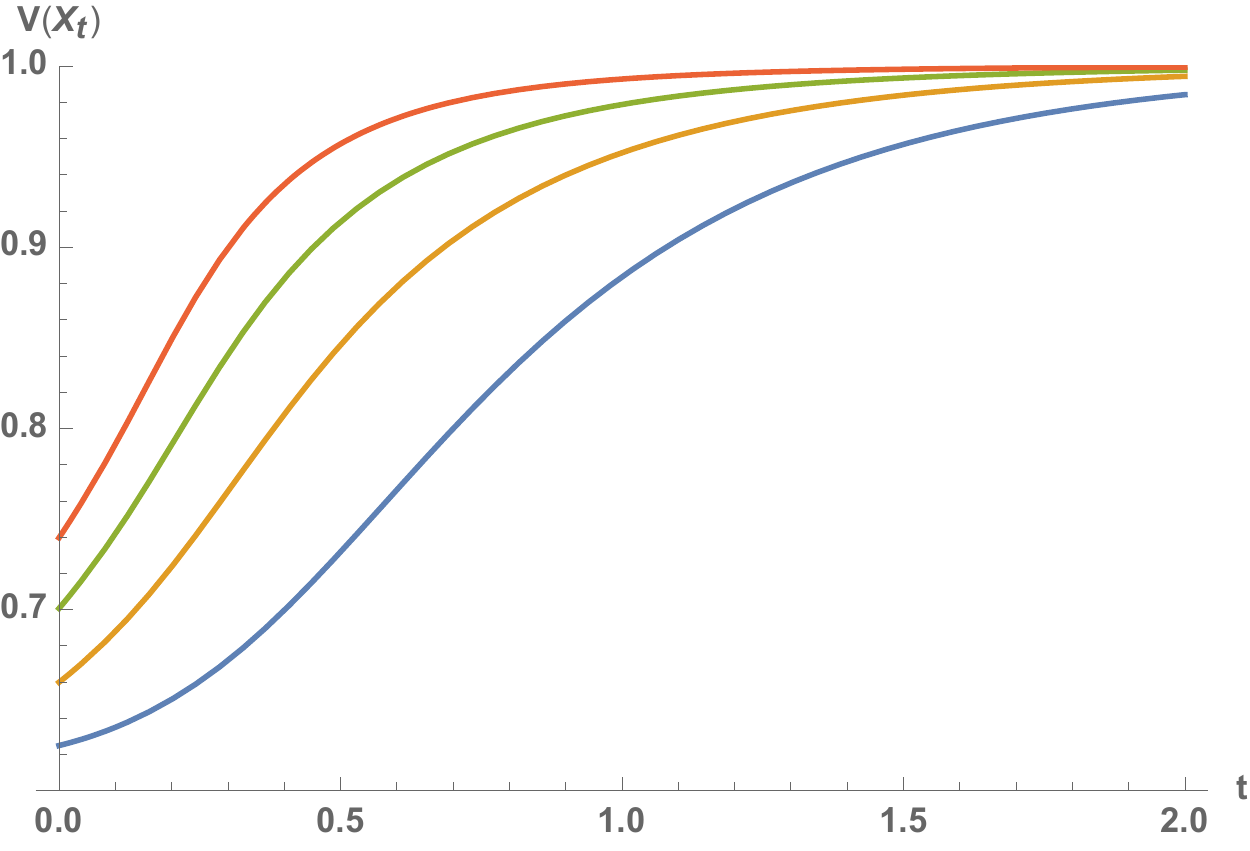}
$\;$
\includegraphics[scale=0.45]{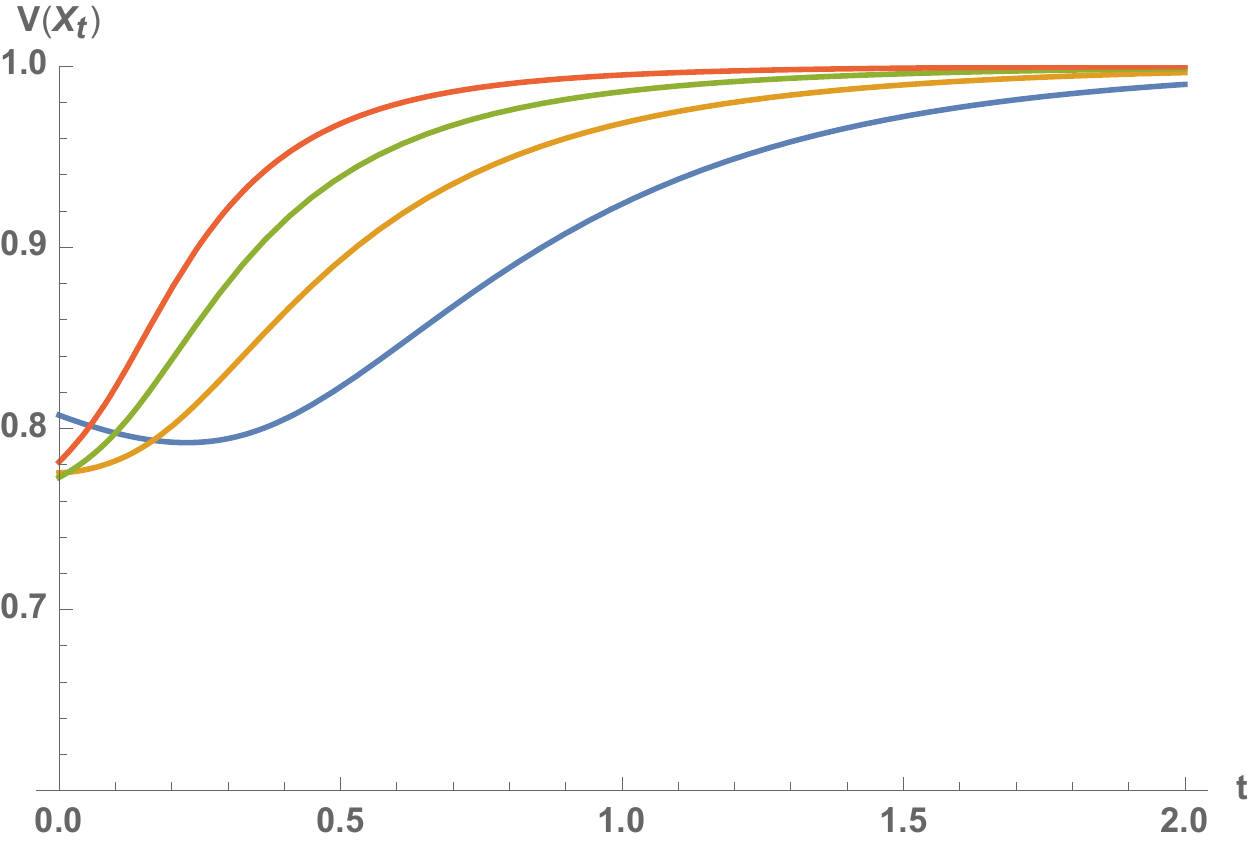}
\caption{Residual varentropy for the same cases of Figure \ref{fig:entropiaEsOUb}, 
with $\nu=0.15$, $0.3$, $0.45$, $0.6$ (from bottom to top).}
\label{fig:ventropiaEsOUb}
\end{figure}
%
\section{Conclusions}
The differential entropy (\ref{eq:shannonentropy}) is  largely used in information theory and other related areas, 
being the analogue of the Shannon entropy for a continuous random variable. It constitutes the expected value 
of the information content (\ref{eq:variabile informazione}), whereas its variance is given by the varentropy 
(\ref{eq:varentropy}). The latter is useful to assess the effectiveness of the differential entropy as a measure 
of the information content of a random system. 
\par
Motivated by possible application in reliability theory and survival analysis, in this paper we investigated 
the residual varentropy, that is the varentropy of the residual lifetime distribution. Together with the residual 
entropy, this measure allows to analyze the dynamical information content of time-varying systems 
conditional on being active at current time.   
We discussed various properties, with connections to the generalized hazard rate, the effect of linear 
transformations, and a suitable lower bound that involves the variance residual life function. 
We also addressed the use of the residual varentropy in connection with classical distributions 
and within some applications concerning the proportional hazards model and the first-passage time 
problem of an Ornstein-Uhlenbeck jump-diffusion process with catastrophes. 
\par
Future developments will be oriented to applications of the varentropy to other stochastic models 
of interest (such as  order statistics, spacings, record values,  
inaccuracy measures based on the relevation transform and its reversed version)  
and to construct an empirical version of the residual varentropy in order to come to suitable estimates. 
\subsection*{Acknowledgements}
%
The  authors are members of the research group GNCS of INdAM. (Istituto Nazionale di Alta Matematica). 
This research is partially supported by MIUR - PRIN 2017, project `Stochastic Models for Complex Systems', 
no.\ 2017JFFHSH.
%

%
\end{document}